\providecommand{\U}[1]{\protect\rule{.1in}{.1in}}
\newtheorem{theorem}{Theorem}
\newtheorem{corollary}[theorem]{Corollary}
\newtheorem{definition}[theorem]{Definition}
\newtheorem{lemma}[theorem]{Lemma}
\newtheorem{proposition}[theorem]{Proposition}
\begin{document}

\title{Circuit partitions and signed interlacement in 4-regular graphs}
\author{Lorenzo Traldi}
\address{Lafayette College, Easton, Pennsylvania 18042}
\email{traldil@lafayette.edu}
\date{}
\subjclass{05C50}
\keywords{4-regular graph, circuit partition, cycle space, Euler system, interlacement}

\begin{abstract}
Let $F$ be a 4-regular graph. Each circuit partition $P$ of $F$ has a
corresponding touch-graph $Tch(P)$; the circuits in $P$ correspond to vertices
of $Tch(P)$, and the vertices of $F$ correspond to edges of $Tch(P)$. We
discuss the connection between modified versions of the interlacement matrix
of an Euler system of $F$ and the cycle space of $Tch(P)$, over $GF(2)$ and
$\mathbb{R}$.
\end{abstract}
\maketitle
\section{Introduction}

This paper is concerned with the connection between two aspects of the structure of a 4-regular graph $F$: partitions of the edge set $E(F)$ into circuits, and interlacement of vertices with respect to Euler systems of $F$. We begin our discussion by reviewing some relevant background and terminology. 

The graphs we consider are unoriented multigraphs; loops and parallel edges are allowed. We think of every edge as consisting of two distinct half-edges, each half-edge incident on one vertex. The degree of a vertex is the number of incident half-edges, and a $d$-regular graph is one whose vertices all have degree $d$.

We use the term \emph{circuit} for an undirected closed trail. A circuit cannot traverse an edge more than once, but it may traverse a vertex more than once. An Euler circuit is a circuit that includes every edge of a graph; a familiar argument shows that a 4-regular graph $F$ has an Euler circuit if and only if $F$ is connected. Every 4-regular graph $F$ has an Euler system, i.e., a set that contains one Euler circuit for each connected component of $F$. 

We use the term \emph{circuit partition} for a partition of the edge set of a 4-regular graph into circuits (i.e., undirected closed trails). The idea of studying circuit partitions of 4-regular graphs was introduced by Kotzig \cite{K}, and developed further by Las Vergnas and Martin \cite{L2,L1, L, Ma}. Circuit partitions in 4-regular graphs have found applications and generalizations in Kauffman's bracket description of the Jones polynomial \cite{Kau}, and in the interlace polynomials of Arratia, Bollob\'{a}s and Sorkin \cite{A2, A}.

Some circuits in a 4-regular graph $F$ are illustrated in Figure \ref{circfig}. On the left we see an Euler circuit, $C$. To trace $C$, start at any vertex and follow edges around $F$, making sure to maintain the plain/dashed line status when passing through a vertex. (The plain/dashed status may change in the middle of an edge.) For instance, if we follow $C$ by starting at $a$ and walking to $b$ along the plain edge, we will encounter vertices in the order $abdabccda$. The same plain/dashed convention is used to indicate the circuits included in $P_1$ and $P_2$. The circuits in $P_1$ may be oriented to visit vertices in the orders $abdcba$, $ada$ and $cc$; the circuits in $P_2$ may be oriented to visit vertices in the orders $abda$ and $abccda$. N.b. Recall that our circuits have neither preferred starting points nor preferred directions; for instance, the longer circuit of $P_1$ might just as well be oriented to visit vertices in the order $cdbabc$.
\begin{figure} 
\centering
\begin{tikzpicture} [>=angle 90]
\draw [thick, fill=black] (-5,-1) circle (.13 cm);
\draw [thick, fill=black] (-5,1) circle (.13 cm);
\draw [thick, fill=black] (-3,-1) circle (.13 cm);
\draw [thick, fill=black] (-3,1) circle (.13 cm);
\draw [thick, dashed] (-3,1) to [out=-70,in=70] (-3,-1);
\draw [thick] (-3,1) to [out=-110,in=110] (-3,-1);
\draw [thick, dashed] (-3,1) to [out=160,in=20] (-5,1);
\draw [thick] (-3,1) to [out=200,in=-20] (-5,1);
\draw [thick, dashed] (-5,-1) -- (-3,-1);
\draw [thick, dashed, domain=90:270] plot ({-5+.3*cos(\x)}, {-1.3+.3*sin(\x)});
\draw [thick, domain=-90:90] plot ({-5+.3*cos(\x)}, {-1.3+.3*sin(\x)});
\draw [thick] (-5,-1) -- (-5,1);
\draw [thick, dashed] (-4,0) -- (-5,1);
\draw [thick] (-3,-1) -- (-4,0);
\node at (-2.5,.8) {$a$};
\node at (-2.5,-.8) {$b$};
\node at (-5.5,-.8) {$c$};
\node at (-5.5,.8) {$d$};
\draw [thick, fill=black] (-1,-1) circle (.13 cm);
\draw [thick, fill=black] (-1,1) circle (.13 cm);
\draw [thick, fill=black] (1,-1) circle (.13 cm);
\draw [thick, fill=black] (1,1) circle (.13 cm);
\draw [thick] (1,1) to [out=-70,in=90] (1.2,0);
\draw [thick, dashed] (1.2,0) to [out=-90,in=70] (1,-1);
\draw [thick] (1,1) to [out=-110,in=110] (1,-1);
\draw [thick, dashed] (1,1) to [out=160,in=20] (-1,1);
\draw [thick, dashed] (1,1) to [out=200,in=-20] (-1,1);
\draw [thick, dashed] (0,-1) -- (1,-1);
\draw [thick] (0,-1) -- (-1,-1);
\draw [thick, dashed, domain=0:360] plot ({-1+.3*cos(\x)}, {-1.3+.3*sin(\x)});
\draw [thick] (-1,-1) -- (-1,1);
\draw [thick] (1,-1) -- (-1,1);
\node at (1.5,.8) {$a$};
\node at (1.5,-.8) {$b$};
\node at (-1.5,-.8) {$c$};
\node at (-1.5,.8) {$d$};
\draw [thick, fill=black] (3,-1) circle (.13 cm);
\draw [thick, fill=black] (3,1) circle (.13 cm);
\draw [thick, fill=black] (5,-1) circle (.13 cm);
\draw [thick, fill=black] (5,1) circle (.13 cm);
\draw [thick, dashed] (5,1) to [out=-70,in=70] (5,-1);
\draw [thick] (5,1) to [out=-110,in=110] (5,-1);
\draw [thick, dashed] (5,1) to [out=160,in=20] (3,1);
\draw [thick] (5,1) to [out=200,in=-20] (3,1);
\draw [thick, dashed] (3,-1) -- (4,-1);
\draw [thick] (4,-1) -- (5,-1);
\draw [thick, dashed, domain=90:270] plot ({3+.3*cos(\x)}, {-1.3+.3*sin(\x)});
\draw [thick, domain=-90:90] plot ({3+.3*cos(\x)}, {-1.3+.3*sin(\x)});
\draw [thick] (3,-1) -- (3,1);
\draw [thick, dashed] (5,-1) -- (3,1);
\node at (5.5,.8) {$a$};
\node at (5.5,-.8) {$b$};
\node at (2.5,-.8) {$c$};
\node at (2.5,.8) {$d$};
\end{tikzpicture}
\caption{An Euler circuit $C$, a 3-element circuit partition $P_1$ and a 2-element circuit partition $P_2$. To follow a circuit, maintain the same plain/dashed line status when traversing a vertex.}
\label{circfig}
\end{figure}

Now, let $F$ be an arbitrary 4-regular graph. A \emph{transition} of $F$ at a vertex $v$ is a partition of the four half-edges incident at $v$ into two pairs; for instance each part of Figure \ref{circfig} indicates one transition at each vertex, with a pair of dashed half-edges and a pair of plain half-edges. $F$ has three different transitions at each vertex. An Euler system $C$ of $F$ may be used to label the transitions of $F$ in the following way. Temporarily choose an arbitrary orientation for each circuit included in $C$. Then for each vertex $v \in V(F)$, a person following the incident circuit of $C$ makes two ``entrances'' to $v$ and two ``exits'' from $v$; say entrance 1 is followed by exit 1, and entrance 2 is followed by exit 2. The ``entrances'' and ``exits'' are the four half-edges of $F$ incident at $v$. The transition that pairs entrance $i$ with exit $i$ for $i \in \{1,2\}$ is labeled $\phi_{C}(v)$; the transition that pairs entrance $i$ with exit $j$ for $i \neq j \in \{1,2\}$ is labeled $\chi_{C}(v)$; and the transition that pairs entrance 1 with entrance 2, and also pairs exit 1 with exit 2, is labeled $\psi_{C}(v)$. It is easy to see that each transition's label with respect to $C$ remains the same if the orientation of a circuit of $C$ is reversed. 

If $C$ and $C^{\prime}$ are different Euler systems of $F$ then some transitions will have different $\phi, \chi, \psi$ labels with respect to $C$ and $C^{\prime}$. For example, we leave it as an exercise for the reader to verify that in Figure \ref{circfig} there is an Euler circuit $C'$ of $F$ with  $\phi_{C'}(a)=\psi_{C}(a)$, $\phi_{C'}(b)=\chi_{C}(b)$, $\phi_{C'}(c)=\psi_{C}(c)$ and $\phi_{C'}(d)=\chi_{C}(d)$. Moreover, only two of the twelve transitions in $F$ have the same $\phi, \chi, \psi$ labels with respect to $C$ and $C'$.

It is easy to see that a circuit partition of $F$ is completely determined by choosing one transition at each vertex. For example, in Figure \ref{circfig} $P_1$ is determined by the transitions $\psi_{C}(a),\phi_{C}(b),\chi_{C}(c)$ and $\psi_{C}(d)$, while $P_2$ is determined by $\phi_{C}(a),\chi_{C}(b),\phi_{C}(c)$ and $\phi_{C}(d)$.

The notion of interlacement with respect to Euler systems in 4-regular graphs has been studied by many authors; see for instance \cite{Bold, CL, RR}.

\begin{definition}
\label{inter}If $C$ is an Euler system of $F$ then two vertices $v\neq w\in
V(F)$ are \emph{interlaced} with respect to $C$ if and only if there is a
circuit of $C$ on which $v$ and $w$ appear in the order $vwvw$ or $wvwv$. The
\emph{interlacement matrix} $\mathcal{I}(C)$ is the $V(F)\times V(F)$ matrix
with entries in the 2-element field $GF(2)$ given by: the $vw$ entry is $1$ if $v$ and $w$ are
interlaced, and $0$ otherwise.
\end{definition}

The fact that there is a connection between circuit partitions and interlacement has been discovered and rediscovered many times. Here is a statement that incorporates the versions of this connection that appear most often in the literature.

\begin{theorem}
\label{circuitnul}Suppose $C$ is an Euler system of a 4-regular graph $F$, and
$P$ is a circuit partition of $F$. Let $\mathcal{I}(C,P)$ be the
symmetric $GF(2)$-matrix obtained from $\mathcal{I}(C)$ by making these
two kinds of changes.
\begin{enumerate}
\item If $P$ involves the $\phi_{C}(v)$ transition, remove the row and column
corresponding to $v$.

\item If $P$ involves the $\psi_{C}(v)$ transition, change the $vv$ entry to
$1$.
\end{enumerate}

Then the $GF(2)$-nullity of $\mathcal{I}(C,P)$ is $\left\vert P\right\vert
-c(F)$, where $\left\vert P\right\vert $ is the number of circuits in $P$ and
$c(F)$ is the number of connected components in $F$.
\end{theorem}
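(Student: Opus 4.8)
The plan is to prove the identity by induction on the number $d(P)$ of vertices of $F$ at which the circuit partition $P$ does \emph{not} use the transition $\phi_{C}(v)$, carrying out the induction over all Euler systems of all $4$-regular graphs at once. Since $\mathcal{I}(C,P)$ is block-diagonal with respect to the connected components of $F$ (vertices on different components are never interlaced along $C$), and since $\left\vert P\right\vert$ and $c(F)$ are additive over components, it suffices to treat the connected case, where $C$ is a single Euler circuit and we must show $\operatorname{null}\mathcal{I}(C,P)=\left\vert P\right\vert -1$. When $d(P)=0$ the partition $P$ coincides with $C$, the matrix $\mathcal{I}(C,P)$ is empty, and both sides are $0$.

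For the inductive step, fix a vertex $v$ at which $P$ uses a non-$\phi_{C}$ transition and let $P_{0}$ be the circuit partition that agrees with $P$ at every other vertex and uses $\phi_{C}(v)$ at $v$; then $d(P_{0})=d(P)-1$, so $\operatorname{null}\mathcal{I}(C,P_{0})=\left\vert P_{0}\right\vert -1$ by the inductive hypothesis. Comparing Definition \ref{inter} and the construction of $\mathcal{I}(C,P)$, the matrix $\mathcal{I}(C,P)$ is exactly $\mathcal{I}(C,P_{0})$ bordered by one new row and column indexed by $v$: the off-diagonal part of that row is the vector $b$ obtained by restricting the $v$-row of $\mathcal{I}(C)$ to the vertices still present, and the new diagonal entry is $\delta=0$ if $P$ uses $\chi_{C}(v)$ and $\delta=1$ if $P$ uses $\psi_{C}(v)$. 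On the combinatorial side, re-routing $P_{0}$ at the single vertex $v$ does exactly one of three things: if $v$ lies on two distinct circuits of $P_{0}$, either non-$\phi_{C}$ transition merges them into one, so $\left\vert P\right\vert =\left\vert P_{0}\right\vert -1$; and if $v$ lies on a single circuit $\gamma\in P_{0}$, then one of the two non-$\phi_{C}$ transitions splits $\gamma$ into two circuits ($\left\vert P\right\vert =\left\vert P_{0}\right\vert +1$) while the other re-routes $\gamma$ through $v$ with an orientation reversal, leaving it a single circuit ($\left\vert P\right\vert =\left\vert P_{0}\right\vert $).

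The crux is to match this trichotomy with the linear-algebra trichotomy for bordering a symmetric $GF(2)$-matrix $M=\mathcal{I}(C,P_{0})$ by a row/column $(b^{\mathsf{T}},\delta)$: the nullity drops by $1$ when $b$ is not in the column space of $M$; and when $b=Mx$ lies in the column space of $M$, the nullity rises by $1$ if $\delta$ equals the parity $\langle\operatorname{diag}(M),x\rangle$ (this parity does not depend on the choice of $x$) and is unchanged otherwise. One must show: merging $\Leftrightarrow$ $b\notin\operatorname{colsp}(M)$; splitting $\Leftrightarrow$ $b=Mx$ with $\delta=\langle\operatorname{diag}(M),x\rangle$; and the orientation-reversing re-route $\Leftrightarrow$ $b=Mx$ with $\delta\neq\langle\operatorname{diag}(M),x\rangle$. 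Granting this, $\operatorname{null}\mathcal{I}(C,P)-\operatorname{null}\mathcal{I}(C,P_{0})=\left\vert P\right\vert -\left\vert P_{0}\right\vert$ and the induction closes. The first equivalence is the Cohn--Lempel-type core, which I would establish by writing $C$ as a double occurrence word, describing how the circuits of $P_{0}$ and $P$ run near $v$ in terms of that word, and using that a vertex $w$ contributes to $b$ precisely when $v$ and $w$ interlace along $C$. The genuine obstacle, I expect, is controlling the diagonal entry $\delta$ in the other two cases: one has to relate the parity $\langle\operatorname{diag}(M),x\rangle$ — which counts, with multiplicities dictated by $x$, the vertices at which $P_{0}$ uses a $\psi_{C}$ transition — to the discrepancy between ``orientation-reversing relative to $C$'' (the meaning of $\psi_{C}(v)$) and ``orientation-reversing relative to $P_{0}$'' (the re-route that conserves the circuit count). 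This requires fixing an orientation of the Euler circuit $C$ and tracking how reversals accumulate along the circuit of $P_{0}$ through $v$, and it is precisely the place where the $\chi_{C}/\psi_{C}$ distinction in the statement is needed.

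It is worth noting that $\left\vert P\right\vert -c(F)$ is also the dimension of the cut space of the touch-graph $Tch(P)$, since $Tch(P)$ has $\left\vert P\right\vert$ vertices and its number of connected components equals $c(F)$. Thus the identity can be read as an equality between $\operatorname{null}\mathcal{I}(C,P)$ and the dimension of the cut space of $Tch(P)$, and one expects the body of the paper to produce that identification directly, thereby bypassing the vertex-by-vertex bookkeeping above.
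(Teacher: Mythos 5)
A preliminary point: the paper states Theorem \ref{circuitnul} as a consolidation of known results, with references, and does not prove it --- indeed the proof of Theorem \ref{funcirc} in Section~2 invokes the $GF(2)$ circuit-nullity formula as an input. So there is no internal proof to compare against, and your proposal has to be judged on its own. On its own terms, your strategy is a standard and workable one: reduce to the connected case via block-diagonality, induct on the number of non-$\phi_{C}$ vertices, and analyze the bordering of the symmetric $GF(2)$-matrix $M=\mathcal{I}(C,P_{0})$ by a row/column $(b^{\mathsf T},\delta)$. The linear-algebra trichotomy you state is correct, and the parity $\langle\operatorname{diag}(M),x\rangle$ for $b=Mx$ is indeed well defined, because for any symmetric $GF(2)$-matrix one has $\operatorname{diag}(M)\in\operatorname{colsp}(M)$: if $Mx=0$ then $0=x^{\mathsf T}Mx=\langle\operatorname{diag}(M),x\rangle$ over $GF(2)$.

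The gap is that the combinatorial half of the argument --- which is the actual content of the theorem --- is left as a plan. None of the three equivalences you list (merge $\Leftrightarrow$ $b\notin\operatorname{colsp}(M)$; split $\Leftrightarrow$ $b=Mx$ with $\delta=\langle\operatorname{diag}(M),x\rangle$; orientation-reversing re-route $\Leftrightarrow$ the complementary parity) is actually proved; you describe how you ``would establish'' the first and explicitly flag the parity bookkeeping as ``the genuine obstacle.'' The first equivalence is already essentially the Cohn--Lempel lemma and needs a genuine argument: $M$ is built from $\mathcal{I}(C)$ by deletions and diagonal flips recording $P_{0}$'s transitions at every vertex other than $v$, so translating ``$v$ lies on two circuits of $P_{0}$'' into a column-space condition on $M$ requires tracking all of that, not just interlacement with $v$ along $C$. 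The parity condition for $\delta$ --- reconciling ``orientation-reversing relative to $C$'' (the meaning of $\psi_{C}(v)$) with ``orientation-reversing relative to the circuit of $P_{0}$ through $v$'' (the re-route that preserves $\left\vert P\right\vert$) --- is exactly where the $\chi_{C}/\psi_{C}$ distinction earns its keep, and it is wholly unresolved in your write-up. Your closing remark that $\left\vert P\right\vert-c(F)$ equals the dimension of the cocycle space of $Tch(P)$ is the right structural lens and is the one the paper develops for its new results, but the paper uses Theorem \ref{circuitnul} as given rather than reproving it from that lens. In short, you have a correct proof skeleton whose crucial combinatorial lemmas are missing.
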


We refer to the formula $\left\vert P\right\vert -c(F)=nullity(\mathcal{I}%
(C,P))$ as the \emph{circuit-nullity formula}. It seems that the earliest
discussion of some version of the formula appears in Brahana's 1921 study of
curves on surfaces \cite{Br}. However the formula was not widely known until
fifty years later, when a special case was discovered by Cohn and Lempel
\cite{CL}. Both of these references state versions of the circuit-nullity
formula which do not mention 4-regular graphs; Brahana refers to the
connectivity of a surface and Cohn and Lempel refer to the number of orbits in
a certain kind of permutation. Also, the version of Cohn and Lempel is
restricted to oriented Euler circuits and circuit partitions; the $\psi$
transitions are not relevant to the permutations they considered. Many other
authors have rediscovered, refined or restated the circuit-nullity formula in
various ways \cite{Be, BM, Bu, Br, CL, J1, Jo, KR, Lau, MP, Me, M, R, So, S,
Tbn, T5, Tnew, Z}.

We leave it as an exercise for the reader to confirm that the circuit-nullity formula holds in Figure \ref{circfig}, by calculating

\[
\mathcal{I}(C,P_1)=
\begin{pmatrix}
1 & 0 & 1 \\
0 & 0 & 0 \\
1 & 0 & 1
\end{pmatrix} \quad
\text{   and   } \quad \mathcal{I}(C,P_2)= (0).
\]

Another important part of the theory of circuit partitions is the notion of a touch-graph. This notion appeared implicitly in work of Jaeger \cite{J1},
and explicitly in Bouchet's work on isotropic systems
\cite{Bi1, Bi2}.

\begin{definition}
\label{touch}If $P$ is a circuit partition in a 4-regular graph $F$ then the
\emph{touch-graph} $Tch(P)$ has a vertex $v_{\gamma}$ for each circuit
$\gamma\in P$, and an edge $e_{v}$ for each vertex $v\in V(F)$; $e_{v}$ is
incident on $v_{\gamma}$ if and only if $\gamma$ passes through $v$.
\end{definition}

\begin{figure} 
\centering
\begin{tikzpicture} [>=angle 90]
\draw [thick, fill=black] (-4,0) circle (.13 cm);
\draw [thick] (-4,0) to [out=15,in=-45] (-3.2,0.8);
\draw [thick] (-4,0) to [out=60,in=135] (-3.2,0.8);
\draw [thick] (-4,0) to [out=165,in=-135] (-4.8,0.8);
\draw [thick] (-4,0) to [out=105,in=45] (-4.8,0.8);
\draw [thick] (-4,0) to [out=-15,in=45] (-3.2,-0.8);
\draw [thick] (-4,0) to [out=-75,in=-135] (-3.2,-0.8);
\draw [thick] (-4,0) to [out=195,in=135] (-4.8,-0.8);
\draw [thick] (-4,0) to [out=255,in=-45] (-4.8,-0.8);
\node at (-2.8,.8) {$e_a$};
\node at (-2.8,-.8) {$e_b$};
\node at (-5.25,-.8) {$e_c$};
\node at (-5.25,.8) {$e_d$};
\draw [thick, fill=black] (-1,-1) circle (.13 cm);
\draw [thick, fill=black] (0,0) circle (.13 cm);
\draw [thick, fill=black] (1,1) circle (.13 cm);
\draw [thick] (-1,-1) -- (0,0);
\draw [thick] (0,0) to [out=15,in=-105] (1,1);
\draw [thick] (0,0) to [out=75,in=-165] (1,1);
\draw [thick] (0,0) to [out=-15,in=45] (.7,-.7);
\draw [thick] (0,0) to [out=-75,in=-135] (.7,-.7);
\node at (1,.25) {$e_a$};
\node at (.3,-1) {$e_b$};
\node at (-.8,-.4) {$e_c$};
\node at (.2,.9) {$e_d$};
\draw [thick, fill=black] (3.5,-.5) circle (.13 cm);
\draw [thick, fill=black] (5,1) circle (.13 cm);
\draw [thick] (5,1) to [out=-90,in=0] (3.5,-.5);
\draw [thick] (5,1) to [out=-60,in=-30] (3.5,-.5);
\draw [thick] (5,1) to [out=-180,in=150] (3.5,-.5);
\draw [thick, domain=0:360] plot ({3.2+.4*cos(\x)}, {-.8+.4*sin(\x)});
\node at (5.1,-.2) {$e_a$};
\node at (4.2,0) {$e_b$};
\node at (2.5,-.8) {$e_c$};
\node at (3.7,.8) {$e_d$};
\end{tikzpicture}
\caption{Touch-graphs from Figure \ref{circfig}.}
\label{touchfig}
\end{figure}

The touch-graphs of the three circuit partitions of Figure \ref{circfig} are pictured in Figure \ref{touchfig}. 

\section{Statement of the main theorem}

Two questions about the circuit-nullity formula should come to mind.

Question 1. Is there a version of the circuit-nullity formula that involves
nullity over the reals instead of $GF(2)$?

Answer 1. Yes, but the real version that has appeared in the literature is of
limited generality. Brahana \cite{Br} discussed a skew-symmetric version of
his matrix for systems of curves drawn on two-sided surfaces, suggesting a
connection with topological orientability. Skew-symmetric versions of
$\mathcal{I}(C,P)$ have also been discussed by Bouchet \cite{Bu}, Jonsson
\cite{Jo}, Lauri \cite{Lau} and Macris and Pul\'{e} \cite{MP}. They all
require that $C$ and $P$ be orientation-consistent, i.e., $P$ cannot involve
any $\psi_{C}$ transition.

Question 2. Does the equality $nullity(\mathcal{I}(C,P))=\left\vert
P\right\vert -c(F)$ indicate a connection between $P$ and the null space of
$\mathcal{I}(C,P)$?

Answer 2. Yes, but for full generality the connection involves a non-symmetric
matrix in place of $\mathcal{I}(C,P)$. Building on earlier partial results
\cite{Bu, J1, T5}, we introduced a modified form of $\mathcal{I}(C,P)$ in
\cite{Tnew}, and showed that it is closely related to the touch-graph of $P$. This modified form of $\mathcal{I}(C,P)$ is defined as follows.

\begin{definition}
\label{modintmat}(\cite{Tnew}) Let $C$ be an Euler system of a 4-regular graph
$F$, and $P$ a circuit partition of $F$. Then the \emph{modified interlacement
matrix}$\ M(C,P)$ is the $V(F)\times V(F)$ matrix with entries in $GF(2)$
obtained from $\mathcal{I}(C)$ by making these two kinds of changes:

\begin{enumerate}
\item If $P$ involves the $\phi_{C}(v)$ transition, change the $vv$ entry to
$1$, and change every other entry of the $v$ column to $0$.

\item If $P$ involves the $\psi_{C}(v)$ transition, change the $vv$ entry to
$1$.
\end{enumerate}
\end{definition}

Observe that
\[
M(C,P)=%
\begin{pmatrix}
I & \ast\\
0 & \mathcal{I}(C,P)
\end{pmatrix}
\text{,}%
\]
where $I$ is an identity matrix whose rows and columns correspond to the
vertices of $F$ where $P$ involves the $\phi_{C}$ transition. It follows that
$M(C,P)$ has the same nullity as $\mathcal{I}(C,P)$. The main theorem of
\cite{Tnew} states that if we consider the rows of $M(C,P)$ as elements of the
vector space $GF(2)^{E(Tch(P))}$ instead of $GF(2)^{V(F)}$, then the
orthogonal complement of the row space of $M(C,P)$ is the subspace spanned by
the vertex cocycles of $Tch(P)$. (Recall that the cocycle of a vertex in a graph is the set of non-loop edges incident on that vertex.) To put it more simply: the row space of $M(C,P)$ is the cycle space of $Tch(P)$ over $GF(2)$.

As examples of this result from
\cite{Tnew}, consider that in Figure \ref{circfig} we have 
\[
M(C,P_1)=
\begin{pmatrix}
1 & 0 & 0 & 1 \\
1 & 1 & 0 & 1 \\
0 & 0 & 0 & 0 \\
1 & 0 & 0 & 1
\end{pmatrix}
\quad \text{and} \quad
M(C,P_2)=
\begin{pmatrix}
1 & 1 & 0 & 0 \\
0 & 0 & 0 & 0 \\
0 & 0 & 1 & 0 \\
0 & 1 & 0 & 1
\end{pmatrix}.
\]
The row space of $M(C,P_1)$ is generated by the first two rows, or equivalently, by $e_a+e_d$ (the first row) and $e_b$ (the difference between the first and second rows). The row space of $M(C,P_2)$ is generated by the three nonzero rows, or equivalently, by $e_a+e_b$, $e_c$ and $e_b+e_d$. Consulting Figure \ref{touchfig}, we see that these row spaces really do coincide with the cycle spaces of $Tch(P_1)$ and $Tch(P_2)$ over $GF(2)$.

Notice that the answers to Questions 1 and 2 are both of the form
\textquotedblleft Yes, but...\textquotedblright\ The second \textquotedblleft
but\textquotedblright\ is resolved over $GF(2)$ by using the nonsymmetric
matrix $M(C,P)$ in place of the traditional (skew-)symmetric $\mathcal{I}%
(C,P)$. The purpose of the present paper is to observe that the first
\textquotedblleft but\textquotedblright\ is also resolved by using
nonsymmetric matrices. In addition to determining the cycle space of $Tch(P)$ rather than only the size of $P$, our result is more general than
previously known versions of the circuit-nullity formula over $\mathbb{R}$; there is no orientability requirement.

\begin{theorem}
\label{main}Suppose $C$ is an Euler system of a 4-regular graph $F$, and $P$
is a circuit partition of $F$. Then there is a $V(F)\times V(F)$ matrix
$M_{\mathbb{R}}(C,P)$ with integer entries, with these two properties.

\begin{enumerate}
\item $M_{\mathbb{R}}(C,P)$ reduces to $M(C,P)$ (modulo $2$).

\item The row space of $M_{\mathbb{R}}(C,P)$ is the cycle space of $Tch(P)$
over $\mathbb{R}$.
\end{enumerate}
\end{theorem}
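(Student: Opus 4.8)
The plan is to build $M_{\mathbb{R}}(C,P)$ by attaching signs to the nonzero entries of $M(C,P)$, one row at a time, and to finish with a rank comparison that uses the $GF(2)$ theorem of \cite{Tnew} as a black box. Fix an orientation of $Tch(P)$ and let $D$ be the corresponding incidence matrix, so that the real cycle space of $Tch(P)$ is $\ker D \subseteq \mathbb{R}^{E(Tch(P))}$. Its dimension is $|E(Tch(P))| - |V(Tch(P))| + c(Tch(P))$, the same integer as the $GF(2)$-dimension of the cycle space; and by \cite{Tnew} this integer also equals the $GF(2)$-rank of $M(C,P)$, because the $GF(2)$-row space of $M(C,P)$ is precisely that cycle space.

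The only graph-theoretic input needed beyond \cite{Tnew} is the elementary observation that, once the orientation of $Tch(P)$ is fixed, every even subgraph $H$ of $Tch(P)$ is the support of a vector $z_H \in \ker D$ all of whose entries lie in $\{-1,0,1\}$. Indeed, partition $E(H)$ into edge-disjoint closed trails and orient each of them; the signed indicator vector of an oriented closed trail lies in $\ker D$ since the trail enters and leaves each vertex equally often, and because the trails are edge-disjoint the sum $z_H$ of these indicator vectors has support exactly $E(H)$, with the stated entries.

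Applying this rowwise finishes most of the work. By \cite{Tnew} each row of $M(C,P)$, read as an edge subset of $Tch(P)$, is an even subgraph $H_v$ ($v \in V(F)$), so I would define $M_{\mathbb{R}}(C,P)$ to be the integer matrix whose $v$th row is $z_{H_v}$. Property 1 is then immediate, since each entry $\pm 1$ of $z_{H_v}$ reduces to the corresponding entry $1$ of $M(C,P)$ and the zero entries agree; and every row of $M_{\mathbb{R}}(C,P)$ lies in $\ker D$, so the real row space of $M_{\mathbb{R}}(C,P)$ is contained in the real cycle space of $Tch(P)$. For the reverse inclusion, recall that for any integer matrix $A$ one has $\mathrm{rank}_{\mathbb{R}}(A) \ge \mathrm{rank}_{GF(2)}(A \bmod 2)$: if $r = \mathrm{rank}_{\mathbb{R}}(A)$ then every $(r+1) \times (r+1)$ minor of $A$ vanishes as an integer, hence modulo $2$, so $A \bmod 2$ has rank at most $r$. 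Hence
\[
\dim(\text{row space of } M_{\mathbb{R}}(C,P)) = \mathrm{rank}_{\mathbb{R}}(M_{\mathbb{R}}(C,P)) \ge \mathrm{rank}_{GF(2)}(M(C,P)) = \dim(\text{real cycle space of } Tch(P)),
\]
and together with the containment this forces equality, which is property 2.

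This argument is soft, and the one step that genuinely needs care is the direction of that last inequality --- reduction of an integer matrix modulo $2$ can only lose rank, never gain it --- which is exactly what keeps the real row space from being too small. I expect the harder and more interesting task to be the construction of a \emph{canonical} $M_{\mathbb{R}}(C,P)$: a true signed analogue of $\mathcal{I}(C)$ and $M(C,P)$ whose $\pm 1$ entries are read off directly from the cyclic orders of an oriented Euler system $C$, with the diagonal adjusted for the $\phi_C$ and $\psi_C$ transitions, together with a direct verification of the Kirchhoff conditions at each circuit of $P$ obtained by tracing that circuit through the vertices of $F$. The sign bookkeeping at the $\psi_C$ transitions, where $C$ and $P$ are not orientation-consistent, is the delicate point in that more explicit approach.
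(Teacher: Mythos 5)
Your proof is correct, and it takes a genuinely different route from the paper's, although the two arguments share the same two pillars: the $GF(2)$ theorem of \cite{Tnew} (used to compute $\mathrm{rank}_{GF(2)}M(C,P) = \lvert E(Tch(P))\rvert - \lvert V(Tch(P))\rvert + c(Tch(P))$), and the observation that reducing an integer matrix modulo $2$ cannot increase its rank (your minor-based argument is if anything cleaner than the paper's Lemma \ref{rank}, which works with rational dependencies and g.c.d.'s). The difference lies in how the integer lift is built, and which half of the argument becomes automatic. The paper defines the rows of $M_{\mathbb{R}}(C,P)$ as $z_D(\overline{\gamma})$ for $\gamma$ ranging over a chosen set of fundamental circuits of $C$ (one per vertex), pushed forward to $Tch(P)$ via the surjection $\pi_P$; with that choice, membership of each row in the real cycle space is immediate (images of closed walks are closed walks), and the work is in checking that these rows reduce mod $2$ to $M(C,P)$, which comes from \cite{Tnew} and is recomputed explicitly in Section 3. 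You work backwards: you start from the rows of $M(C,P)$, use \cite{Tnew} to recognize them as even subgraphs of $Tch(P)$, and sign them via a Veblen-style decomposition into edge-disjoint oriented closed trails. With that choice the mod-$2$ reduction is automatic, and the work is in the (easy) trail-decomposition step that puts the signed vector into $\ker$ of the incidence matrix. Your construction is shorter and yields a matrix with entries in $\{-1,0,1\}$, but it depends on arbitrary decomposition choices; the paper's construction via fundamental circuits of $C$ is canonical, which is exactly what drives Sections 3--7 (the standard form $M_{\mathbb{R}}^0(C,P)$, its block structure, naturality under $\kappa$-transformations and transpositions, and the Lauri--Macris--Pul\'e determinant count). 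Your closing remark correctly anticipates that this canonical construction is where the real work of the paper lies.
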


If $M_{\mathbb{R}}(C,P)$ satisfies Theorem \ref{main}, then $M_{\mathbb{R}%
}(C,P)$ also satisfies the circuit-nullity formula over $\mathbb{R}$; that is,
the $\mathbb{R}$-nullity of $M_{\mathbb{R}}(C,P)$ is $\left\vert P\right\vert
-c(F)$. The reason is simple:\ $M_{\mathbb{R}}(C,P)$ is a $V(F)\times V(F)$
matrix whose rank is the dimension of the cycle space of $Tch(P)$,%
\[
\left\vert E(Tch(P))\right\vert -\left\vert V(Tch(P))\right\vert
+c(Tch(P))=\left\vert V(F)\right\vert -\left\vert P\right\vert
+c(Tch(P))\text{.}%
\]
Consequently the $\mathbb{R}$-nullity of $M_{\mathbb{R}}(C,P)$ is $\left\vert
P\right\vert -c(Tch(P))$. It is easy to prove that $c(Tch(P))=c(F)$; see
Proposition \ref{comps} below.

Unless $Tch(P)$ is a forest, there are infinitely many different matrices
$M_{\mathbb{R}}(C,P)$ which satisfy\ Theorem \ref{main}. For if $M_{\mathbb{R}%
}(C,P)$ satisfies Theorem \ref{main} and $\rho$ is a nonzero row of
$M_{\mathbb{R}}(C,P)$, then we may add $\pm2\rho$ to any row of $M_{\mathbb{R}%
}(C,P)$ without disturbing either property specified in Theorem \ref{main}.
Because of this nonuniqueness we will often refer to \textquotedblleft an
$M_{\mathbb{R}}(C,P)$ matrix\textquotedblright\ rather than simply using the
notation $M_{\mathbb{R}}(C,P)$.

Theorem \ref{main} is proved in Section 3. In Section 4, we provide a standard
form for $M_{\mathbb{R}}(C,P)$, denoted $M_{\mathbb{R}}^{0}(C,P)$. The
standard form is defined using a signed version of $C$; that is, for each
$v\in V(F)$, one passage of a circuit of $C$ through $v$ is arbitrarily
designated $v^{+}$, and the other is $v^{-}$. When $C$ and $P$ respect the
same edge directions in $F$, $M_{\mathbb{R}}^{0}(C,P)$ is closely related to
the skew-symmetric matrices used by Bouchet \cite{Bu}, Jonsson \cite{Jo},
Lauri \cite{Lau} and Macris and Pul\'{e} \cite{MP}. Moreover, in this special
case $M_{\mathbb{R}}^{0}(C,P)$ has several attractive \textquotedblleft
naturality\textquotedblright\ properties; for instance if $C$ and $C^{\prime}$
are two Euler systems which respect the same edge directions then for each
signed version of $C$ there is a signed version of $C^{\prime}$ such that
$M_{\mathbb{R}}^{0}(C^{\prime},C)=M_{\mathbb{R}}^{0}(C,C^{\prime})^{-1}$. The
standard form does not have such nice properties in general. For instance, if
$C$ and $C^{\prime}$ are two Euler systems which do not respect the same edge
directions, then $M_{\mathbb{R}}^{0}(C,C^{\prime})^{-1}$ may have fractional
entries. An example of this type is presented in Section 5, along with a
couple of other examples; one of them shows that in general we cannot require
that $M_{\mathbb{R}}^{0}(C,P)$ be skew-symmetric. In Section 6 we discuss the
relationship between $M_{\mathbb{R}}(C,P)$ and $M_{\mathbb{R}}(C^{\prime},P)$
matrices, where $C$ and $C^{\prime}$ are Euler systems of $F$; we also
summarize the special features of the theory over $GF(2)$. In Sections 7 and
8 we discuss the special features of the orientation-consistent theory over $\mathbb{R}$, including the naturality properties mentioned earlier in this paragraph. The paper ends with a brief account of the important result of
Lauri \cite{Lau} and Macris and Pul\'{e} \cite{MP}, which gives a determinant
formula for the number of Euler systems of $F$ that respect the same edge
directions as $C$.

Before proceeding to give details, we should mention that the present paper provides the foundation for algebraic characterizations of circle graphs using multimatroid properties analogous to matroid regularity~\cite{BT4, BT}.

\section{Proof of the main theorem}

We begin with an elementary algebraic result. Let $f:\mathbb{Z}\rightarrow
GF(2)$ be the ring homomorphism with $f(1)=1$. If $G$ is a graph we obtain a
homomorphism $f:\mathbb{Z}^{E(G)}\rightarrow GF(2)^{E(G)}$ of abelian groups
by applying $f$ in each coordinate.

\begin{lemma}
\label{rank}If $S\subseteq\mathbb{Z}^{E(G)}$ then the rank of $S$ in
$\mathbb{R}^{E(G)}$ is not less than the rank of $f(S)$ in $GF(2)^{E(G)}$.
\end{lemma}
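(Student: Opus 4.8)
The plan is to prove the statement by exhibiting, inside $S$, a subset that is independent over $\mathbb{R}$ and has size equal to the $GF(2)$-rank of $f(S)$. Let $r$ denote the rank of $f(S)$ in $GF(2)^{E(G)}$, and choose $s_{1},\dots,s_{r}\in S$ whose images $f(s_{1}),\dots,f(s_{r})$ form a basis of the $GF(2)$-span of $f(S)$. It then suffices to show that $s_{1},\dots,s_{r}$ are linearly independent over $\mathbb{R}$, for this forces the $\mathbb{R}$-rank of $S$ to be at least $r$.

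First I would reduce a hypothetical real dependence to an integer one. Suppose $\sum_{i=1}^{r}c_{i}s_{i}=0$ with $c_{i}\in\mathbb{R}$ not all zero. Since each $s_{i}$ has integer (in particular rational) entries, a linear dependence among them can be found by Gaussian elimination and hence may be taken with rational coefficients; clearing denominators produces integers $a_{1},\dots,a_{r}$, not all zero, with $\sum a_{i}s_{i}=0$, and dividing through by $\gcd(a_{1},\dots,a_{r})$ we may assume the $a_{i}$ are not all even.

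Next I would push this dependence through $f$. Because $f\colon\mathbb{Z}\rightarrow GF(2)$ is a ring homomorphism and its coordinatewise extension $f\colon\mathbb{Z}^{E(G)}\rightarrow GF(2)^{E(G)}$ is a homomorphism of abelian groups, applying $f$ to $\sum a_{i}s_{i}=0$ gives $\sum_{i}f(a_{i})f(s_{i})=0$ in $GF(2)^{E(G)}$, that is, $\sum_{i:\,a_{i}\text{ odd}}f(s_{i})=0$. As the $a_{i}$ are not all even, this is a nontrivial $GF(2)$-linear relation among $f(s_{1}),\dots,f(s_{r})$, contradicting their independence. Hence no such real dependence exists, and $s_{1},\dots,s_{r}$ are $\mathbb{R}$-independent, completing the proof.

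The argument is elementary; the only points needing a little care are the first reduction -- that $\mathbb{R}$-linear dependence among rational vectors already occurs over $\mathbb{Q}$, so that clearing denominators is legitimate -- and the observation that dividing by the gcd leaves at least one odd coefficient, which is precisely what makes the reduction modulo $2$ nontrivial. I do not expect any genuine obstacle beyond laying these steps out cleanly.
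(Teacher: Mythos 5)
Your proof is correct and follows essentially the same route as the paper's: assume a real dependence among preimages of a $GF(2)$-independent set, reduce the coefficients to rationals, clear denominators and divide by the gcd to obtain an integer relation with at least one odd coefficient, and reduce modulo $2$ to contradict independence in $GF(2)^{E(G)}$. The only cosmetic difference is that the paper phrases the claim as ``if $f(T)$ is independent then $T$ is independent'' for arbitrary $T\subseteq S$, whereas you fix a specific maximal independent subset of $f(S)$ up front; the substance is identical.
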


\begin{proof}
As the rank is the cardinality of a maximal linearly independent subset, it is
enough to show that if $T\subseteq S$ and $f(T)$ is linearly independent, then
$T$ is linearly independent too. Suppose instead that $T$ is linearly
dependent. Then there is a sum
\[
\sum_{t\in T}q_{t}t=0\text{,}%
\]
in which the coefficients $q_{t}$ are real numbers, not all of which are $0$.
Eliminating irrational factors, we may presume the $q_{t}$ are all rational;
then multiplying by their denominators and dividing by the greatest common
divisor, we may presume that the $q_{t}$ are integers whose g.c.d. is $1$. But
then
\[
\sum_{t\in T}f(q_{t})f(t)=0\text{,}%
\]
and the $f(q_{t})$ are not all $0$. This contradicts the independence of
$f(T)$.
\end{proof}

We take a moment to discuss our technical vocabulary. As mentioned in Section 1,  we think of an edge in a graph as consisting of two distinct half-edges, each half-edge incident on one vertex. When we want to direct an edge, we designate one of its half-edges as initial, and the other as terminal. Notice that this convention provides every edge with two distinct directions, even if the edge is a loop.

A directed walk in a graph is a sequence $W=v_{1}$, $h_{1}$, $h_{1}^{\prime}$, $v_{2}$,
..., $v_{k}$, $h_{k}$, $h_{k}^{\prime}$, $v_{k+1}$ such that for each $i$,
$h_{i+1}$ and $h_{i}^{\prime}$ are half-edges incident on $v_{i+1}$, and
$h_{i}$ and $h_{i}^{\prime}$ are the half-edges of an edge $e_{i}$. We consider the reversed sequence $W'=v_{k+1}$, $h_{k}^{\prime}$, $h_{k}$, $v_{k}$,
..., $v_{2}$, $h_{1}^{\prime}$, $h_{1}$, $v_{1}$ to define a different directed walk, even if $k=1$ and $e_1$ is a loop. However, $W$ and $W'$ define the same undirected walk. When we say ``$W$ is a walk'' without specifying that $W$ is directed, we usually mean that $W$ is undirected.

We take a moment to explain a special case. Suppose $W=v_{1}$, $h_{1}$, $h_{1}^{\prime}$, $v_{2}$,
..., $v_{k}$, $h_{k}$, $h_{k}^{\prime}$, $v_{k+1}$ is a directed walk with $k>1$, and there is an index $i$ such that $e_i$ is a loop. Then a new directed walk may be obtained from $W$ by interchanging $h_{i}$ and $h_{i}^{\prime}$. This new directed walk is distinct from $W$ because directed walks are sequences of half-edges. These two directed walks do not differ by simple reversal, so they define distinct undirected walks.

A trail is a walk without repeated edges, i.e., $e_{i}\neq e_{j}$ when $i\neq j\in\{1,...,k\}$. A path is a trail without repeated vertices except possibly at the beginning and end, i.e., $v_{i}\neq v_{j}$ when $i\neq j$ and
$\{i,j\}\neq\{1,k+1\}$.

A walk is closed
if $v_{1}=v_{k+1}$. We consider two closed directed walks to be the same if they
differ only by a cyclic permutation. That is, if $v_{1}=v_{k+1}$ then $v_{1}$,
$h_{1}$, $h_{1}^{\prime}$, $v_{2}$, ..., $v_{k}$, $h_{k}$, $h_{k}^{\prime}$,
$v_{k+1}$ and $v_{i}$, $h_{i}$, $h_{i}^{\prime}$, $v_{i+1}$, ...,
$h_{k}^{\prime}$, $v_{k+1}=v_{1}$, $h_{1}$, ..., $h_{i-1}^{\prime}$, $v_{i}$
determine the same closed directed walk. A closed trail is a circuit. (Some references agree with this usage, but others use \textquotedblleft
circuit\textquotedblright\ only for a closed path.) 

For notation and terminology regarding cycles and
cocycles in graphs, we follow Bollob\'{a}s \cite[Section II.\ 3]{B} for the
most part. We refer the reader there for proofs. Here is a summary.

Suppose $D$ is a directed version of a graph $G$ and $W$ is a directed walk in $G$. Let
$\mathbb{K}$ be a field, and $\mathbb{K}^{E(G)}$ the vector space over
$\mathbb{K}$ with basis $E(G)$. There is a vector $z_{D}(W)\in\mathbb{K}%
^{E(G)}$ determined by following $W$ from beginning to end, and for each
edge $e\in E(G)$, tallying $+1$ in the $e$ coordinate each time we pass
through $e$ in the $D$ direction, and $-1$ in the $e$ coordinate each time we
pass through $e$ in the opposite direction. The \emph{cycle space} $Z_{D}(G)$
over $\mathbb{K}$ is the subspace of $\mathbb{K}^{E(G)}$ spanned by
$\{z_{D}(W)\mid W$ is a closed directed walk in $G\}$. Also, if $X\subseteq V(G)$ then
there is an element $u_{D}(X)\in\mathbb{K}^{E(G)}$ whose $e$ coordinate, for
each $e\in E(G)$, is $+1$ if $e$ is directed in $D$ from a vertex in $X$ to a
vertex not in $X$, $-1$ if $e$ is directed in $D$ from a vertex not in $X$ to
a vertex in $X$, and $0$ otherwise. The subspace of $\mathbb{K}^{E(G)}$
spanned by $\{u_{D}(X)\mid X\subseteq V(G)\}$ is the \emph{cocycle space} of
$G$\ over $\mathbb{K}$, denoted $U_{D}(G)$.

We recall seven properties of these spaces. (i) No special property is
required of $\mathbb{K}$; any field will do. (However we are primarily
interested in $\mathbb{K}=GF(2)$ or $\mathbb{R}$.) (ii) No special property is
required of $D$; any directed version of $G$ yields spaces that correspond to
all closed walks and all cocycles. (iii) $Z_{D}(G)$ is spanned by the vectors
$z_{D}(W)$ such that $W$ is a minimal directed circuit. (iv) $U_{D}(G)$ is spanned by
the vectors $u_{D}(\{v\})$ such that $v\in V(G)$. (v) If $G$ has $c(G)$
connected components then the dimension of $U_{D}(G)$ is $\left\vert
V(G)\right\vert -c(G)$. (vi) $U_{D}(G)$ and $Z_{D}(G)$ are orthogonal
complements. (We refer to this property as \emph{cycle-cocycle duality}.)
(vii) The orthogonality between $U_{D}(G)$ and $Z_{D}(G)$ rests on the simple
observation that as we follow a closed directed walk, we must enter each subset
$X\subseteq V(G)$ the same number of times that we leave $X$. This simple
observation goes back to the very beginning of graph theory, in Euler's
discussion of the seven bridges of K\"{o}nigsberg.

The machinery of cycle-cocycle duality may be summarized in matrix form, like this:

\begin{theorem}
\label{cocycle}Given a spanning set $S$ for $Z_{D}(G)$, let $Z_{S}$ be the
$S\times E(G)$ matrix whose rows are the elements of $S$. Let $U_{V(G)}$ be
the $E(G)\times V(G)$ matrix whose columns are the vectors $u_{D}(\{v\})$,
$v\in V(G)$. Then the rank of $Z_{S}$ is $\left\vert
E(G)\right\vert -\left\vert V(G)\right\vert +c(G)$, the rank of $U_{V(G)}$ is
$\left\vert V(G)\right\vert -c(G)$, and $Z_{S}\cdot U_{V(G)}=0$.
\end{theorem}

We may now restate Theorem \ref{main} in the following equivalent form.

\begin{theorem}
\label{main2}Suppose $C$ is an Euler system of a 4-regular graph $F$, and $P$
is a circuit partition of $F$. Let $D$ be a directed version of $G=Tch(P)$.
Then there is a matrix $M_{\mathbb{R}}(C,P)$ of integers, which has the
following properties.

\begin{enumerate}
\item $M_{\mathbb{R}}(C,P)$ reduces (modulo $2$) to $M(C,P)$.

\item In the notation of Theorem \ref{cocycle} with $\mathbb{K}=\mathbb{R}$,
$Z_{D}(G)$ has a spanning set $S$ such that $Z_{S}=M_{\mathbb{R}}(C,P)$.
\end{enumerate}
\end{theorem}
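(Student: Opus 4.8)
The plan is to obtain $M_{\mathbb{R}}(C,P)$ by lifting the rows of $M(C,P)$ one at a time, from $GF(2)$ to $\mathbb{Z}$, so that each lifted row lies in $Z_D(G)$ over $\mathbb{R}$, where $G=Tch(P)$; properties 1 and 2 will then follow from Lemma \ref{rank}, the main theorem of \cite{Tnew}, and a dimension count.

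The one elementary point I would isolate first is a lifting principle: for any graph $G$ with a fixed directed version $D$, every element of the cycle space $Z_D(G)$ over $GF(2)$ is the $f$-image of some integer vector that lies in $Z_D(G)$ over $\mathbb{R}$. This is immediate from property (iii) of the cycle space, since if $r$ lies in the $GF(2)$-cycle space then $r=\sum_{W\in\mathcal{W}}z_D(W)$ for a finite set $\mathcal{W}$ of minimal circuits, and the integer vector $\widetilde{r}=\sum_{W\in\mathcal{W}}z_D(W)\in\mathbb{Z}^{E(G)}$ then lies in $Z_D(G)$ over $\mathbb{R}$ and satisfies $f(\widetilde{r})=r$ (the signed traversal counts and the unsigned traversal counts have the same parities).

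Now I would apply this with $G=Tch(P)$, identifying $V(F)$ with $E(G)$ through $v\mapsto e_v$. By the main theorem of \cite{Tnew}, each row $r_v$ of $M(C,P)$ lies in the cycle space of $Tch(P)$ over $GF(2)$, so the lifting principle provides an integer vector $\rho_v\in Z_D(G)$ with $f(\rho_v)=r_v$. Let $M_{\mathbb{R}}(C,P)$ be the $V(F)\times V(F)$ integer matrix with row $\rho_v$ in position $v$; property 1 holds by construction. For property 2, each $\rho_v$ lies in $Z_D(G)$, so the row space is contained in $Z_D(G)$; and by Lemma \ref{rank} the $\mathbb{R}$-rank of $\{\rho_v:v\in V(F)\}$ is at least the $GF(2)$-rank of $\{r_v:v\in V(F)\}$, which by \cite{Tnew} equals $\dim_{GF(2)}Z_D(G)$. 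Since Theorem \ref{cocycle} is valid over any field, this number is $|E(G)|-|V(G)|+c(G)=\dim_{\mathbb{R}}Z_D(G)$, so the row space of $M_{\mathbb{R}}(C,P)$ has dimension at least $\dim_{\mathbb{R}}Z_D(G)$ and hence equals $Z_D(G)$. Taking $S$ to be the family $(\rho_v)_{v\in V(F)}$ gives $Z_S=M_{\mathbb{R}}(C,P)$, which is Theorem \ref{main2}.

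This existence argument is short, but it pins down $M_{\mathbb{R}}(C,P)$ only up to the large indeterminacy noted after Theorem \ref{main}. I expect the substantive part of the section to be the choice of the lifts $\rho_v$ as honest $Tch(P)$-cycle vectors $z_D(W_v)$ of explicit closed walks: for a $\phi_C$ vertex $v$ one takes the walk traversing the loop $e_v$ once, and for a $\chi_C$ or $\psi_C$ vertex one follows the arc of the circuit of $C$ running between the two passages through $v$ --- the ``chord at $v$'', which meets each other edge $e_w$ of $Tch(P)$ with the parity recorded by the interlacement matrix --- correcting the $e_v$-coordinate according to the diagonal rule defining $M(C,P)$. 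The genuinely technical step, and the main obstacle, is verifying that the $Tch(P)$-cycle vector of such a $W_v$ really does reduce modulo $2$ to the $v$th row of $M(C,P)$; this amounts to re-deriving the definition of $M(C,P)$ from the combinatorics of $C$, $P$ and $Tch(P)$, and is in effect a signed refinement of the computation behind the main theorem of \cite{Tnew}.
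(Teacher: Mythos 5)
Your abstract existence argument is correct and complete, and it takes a genuinely different route from the paper's. The paper does not lift the rows of $M(C,P)$ abstractly: it constructs the lift directly, using the fundamental circuits $C_i(C,v)$, and then \emph{observes} that $M(C,P)$ is the matrix whose rows are $f(z_D(\overline{\gamma}))$ for one chosen fundamental circuit $\gamma$ at each vertex --- the combinatorial identification you flag as ``the main obstacle'' is not re-derived in this paper at all, but is treated as a known consequence of the computation underlying the main theorem of \cite{Tnew}. Given that observation, the paper's proof of Theorem \ref{funcirc} is just your rank comparison: each $z_D(\overline{\gamma})$ lies in $Z_D(Tch(P))$, the $GF(2)$-rank of the $f(s)$ equals $\dim Z_D(Tch(P))$ by the circuit-nullity formula, and Lemma \ref{rank} upgrades this to the $\mathbb{R}$-rank. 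Your version dispenses with the explicit fundamental-circuit lift entirely: by the lifting principle, each row of $M(C,P)$ lifts to some integer cycle vector, and the same rank count finishes. The trade-off is that the paper's explicit choice of lift is not merely a device for proving existence --- it is re-used to define the standard form $M_{\mathbb{R}}^{0}(C,P)$ in Section~3 and underlies everything in Sections~5--7, so the paper needs the concrete construction anyway. Your abstract argument is cleaner for the bare statement of Theorem \ref{main2} but would have to be supplemented by the fundamental-circuit construction for the rest of the paper; and the ``genuinely technical step'' you anticipate is, in this paper's treatment, absorbed into the citation to \cite{Tnew} rather than re-proved.
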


Suppose now that $F$ is a 4-regular graph. As mentioned in Section 1, if $v \in V(F)$ then a transition at $v$ is a partition of the four half-edges of $F$ incident on $v$ into two pairs. Each of the pairs is called a \emph{single transition}. If $P$ is a circuit partition of a 4-regular graph $F$, then $P$ is determined by the choice of a transition
$P(v)$ at each vertex of $F$.

\begin{figure}[ptb]%
\centering
\includegraphics[
trim=2.943292in 8.158451in 2.676492in 1.739985in,
height=0.8579in,
width=2.1862in
]%
{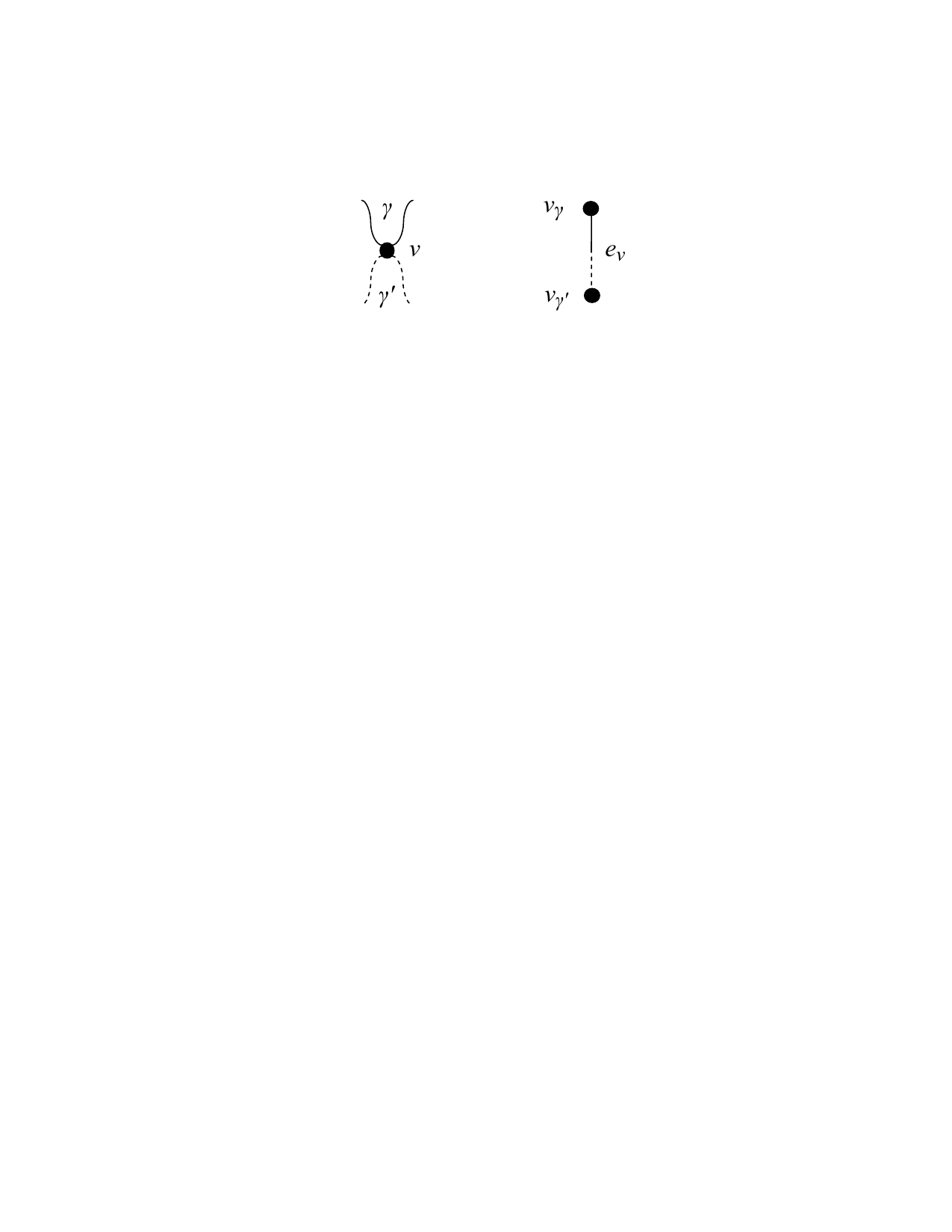}%
\caption{$F$ is on the left, and $Tch(P)$ is on the right.}%
\label{signinf1}%
\end{figure}

Recall that edges of $Tch(P)$ correspond to vertices of $F$ and vertices of
$Tch(P)$ correspond to circuits of $F$, as indicated in Figure \ref{signinf1}.
There is also a relationship between closed walks in $F$ and closed walks in
$Tch(P)$, which we proceed to describe.

As suggested in Figure \ref{signinf1}, there is a natural 2-to-1 surjection
\[
\pi_{P}:\{\text{half-edges of }F\}\rightarrow\{\text{half-edges of
}Tch(P)\}\text{,}%
\]
which we denote $\pi_{P}(h)=\overline{h}$. Suppose the four half-edges of $F$
incident on $v$ are $h_{v}^{1}$, $h_{v}^{2}$, $h_{v}^{3}$ and $h_{v}^{4}$, and
the two single transitions included in $P(v)$ are $\{h_{v}^{1},h_{v}^{2}\}$
and $\{h_{v}^{3},h_{v}^{4}\}$. Let $\gamma\in P$ be the circuit that includes
$h_{v}^{1}$ and $h_{v}^{2}$, and let $\gamma^{\prime}\in P$ be the circuit
that includes $h_{v}^{3}$ and $h_{v}^{4}$. Then the two half-edges of $e_{v}$
in $Tch(P)$ are $\overline{h_{v}^{1}}=\overline{h_{v}^{2}}$ and $\overline
{h_{v}^{3}}=\overline{h_{v}^{4}}$. The half-edge $\overline{h_{v}^{1}%
}=\overline{h_{v}^{2}}$ is incident on the vertex $v_{\gamma}\in V(Tch(P))$,
and the half-edge $\overline{h_{v}^{3}}=\overline{h_{v}^{4}}$ is incident on
the vertex $v_{\gamma^{\prime}}\in V(Tch(P))$.

This surjection $\pi_{P}$ on half-edges induces a related surjection,%
\[
\pi_{P}:\{\text{closed directed walks in }F\}\rightarrow\{\text{closed directed walks in
}Tch(P)\}\text{.}%
\]
Suppose $W$ is the closed directed walk $v_{1}$, $h_{1}$, $h_{1}^{\prime}$, $v_{2}$,
..., $v_{k}$, $h_{k}$, $h_{k}^{\prime}$, $v_{k+1}=v_{1}$ in $F$. Then there
are circuits $\gamma_{1},...,\gamma_{k}\in P$ such that $\gamma_{i}$ includes
the edge $e_{i}\in E(F)$ whose half-edges are $h_{i}$ and $h_{i}^{\prime}$.
Consider the list $v_{\gamma_{1}}$, $\overline{h_{1}^{\prime}}$,
$\overline{h_{2}}$, $v_{\gamma_{2}}$, ..., $v_{\gamma_{k}}$, $\overline
{h_{k}^{\prime}}$, $\overline{h_{1}}$, $v_{\gamma_{1}}$ of vertices and
half-edges in $Tch(P)$. Each index $i\in\{1,...,k\}$ is of one of the
following three types. A type (a) index has $\gamma_{i}\not =\gamma_{i+1}$. In
this case $\overline{h_{i}^{\prime}}\not =\overline{h_{i+1}}$ and $e_{v_{i+1}%
}=\{\overline{h_{i}^{\prime}},\overline{h_{i+1}}\}$ is a non-loop edge of
$Tch(P)$.\ A type (b) index has $\gamma_{i}=\gamma_{i+1}$, and the single
transition $\{h_{i}^{\prime},h_{i+1}\}$ is excluded from $P$. In this case
$\overline{h_{i}^{\prime}}\not =\overline{h_{i+1}}$ and $e_{v_{i+1}%
}=\{\overline{h_{i}^{\prime}},\overline{h_{i+1}}\}$ is a loop of $Tch(P)$. A
type (c) index has $\gamma_{i}=\gamma_{i+1}$, and the single transition
$\{h_{i}^{\prime},h_{i+1}\}$ is included in $P$. In this case $\overline
{h_{i}^{\prime}}=\overline{h_{i+1}}$ and the pair $\{\overline{h_{i}^{\prime}%
},\overline{h_{i+1}}\}$ is not an edge of $Tch(P)$. We define $\pi
_{P}(W)=\overline{W}$ to be the closed directed walk in $Tch(P)$ obtained from the list
$v_{\gamma_{1}}$, $\overline{h_{1}^{\prime}}$, $\overline{h_{2}}$,
$v_{\gamma_{2}}$, ..., $v_{\gamma_{k}}$, $\overline{h_{k}^{\prime}}$,
$\overline{h_{1}}$, $v_{\gamma_{1}}$ by removing every subsequence $\gamma
_{i}$, $\overline{h_{i}}$, $\overline{h_{i+1}^{\prime}}$ with $i$ of type (c).

\begin{proposition}
\label{comps}There is a one-to-one correspondence between connected components
of $F$ and $Tch(P)$: $\{v_{1},...,v_{k}\}$ is the vertex set of a connected
component of $F$ if and only if $\{e_{v_{1}},...,e_{v_{k}}\}$ is the edge set
of a connected component of $Tch(P)$.
\end{proposition}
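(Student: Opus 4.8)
The plan is to prove the more structural statement that $Tch(P)$ is the disjoint union, over the connected components $F_0$ of $F$, of the subgraphs $T_0$ defined by $V(T_0)=\{v_\gamma : \gamma\in P,\ \gamma\subseteq F_0\}$ and $E(T_0)=\{e_v : v\in V(F_0)\}$, and that each such $T_0$ is connected. The proposition then follows at once, since $v\mapsto e_v$ is a bijection from $V(F)$ onto $E(Tch(P))$.

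First I would obtain the decomposition directly from the definitions. Every circuit $\gamma\in P$ is a connected subgraph of $F$ and has at least one edge (because $F$ is $4$-regular, hence has no isolated vertices, and the circuits of $P$ partition $E(F)$); so $\gamma$ lies inside a single connected component $F[\gamma]$ of $F$, and the sets $\{v_\gamma:\gamma\subseteq F_0\}$ partition $V(Tch(P))$ as $F_0$ ranges over the components of $F$. Next, for each $v\in V(F)$ the edge $e_v$ joins $v_\gamma$ and $v_{\gamma'}$, where $\gamma$ and $\gamma'$ are the one or two circuits of $P$ that pass through $v$ (and $e_v$ is a loop when $\gamma=\gamma'$). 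Since $v$ lies in exactly one component of $F$ and both $\gamma,\gamma'$ pass through $v$, both $F[\gamma]$ and $F[\gamma']$ equal the component of $v$. Hence no edge of $Tch(P)$ runs between vertices coming from different components of $F$, and $e_v\in E(T_0)$ precisely when $v\in V(F_0)$; this yields $Tch(P)=\bigsqcup_{F_0}T_0$, with each $T_0$ nonempty. It remains to prove each $T_0$ connected.

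For connectedness, fix a component $F_0$ of $F$ and a connected component of $T_0$, regarded as a set $A$ of circuits of $P$ lying in $F_0$, and consider the subgraph $U=\bigcup_{\gamma\in A}\gamma$ of $F_0$. The key point is that $U$ is \emph{saturated}: if $v\in V(U)$, then $v$ lies on some $\gamma\in A$; the other single transition of $P$ at $v$ is used by some circuit $\gamma'$, which also passes through $v$, so $v_\gamma$ and $v_{\gamma'}$ are joined by $e_v$ in $T_0$ and hence $\gamma'\in A$; therefore all four half-edges of $F$ at $v$ — and so all edges of $F$ incident on $v$ — lie in $U$. Since every edge of $U$ also has both endpoints in $V(U)$, there are no edges of $F_0$ between $V(U)$ and $V(F_0)\setminus V(U)$; as $F_0$ is connected and $V(U)\neq\varnothing$, we get $V(U)=V(F_0)$. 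Finally, any circuit $\gamma''\subseteq F_0$ contains an edge with an endpoint $v\in V(F_0)=V(U)$, so every circuit through $v$, in particular $\gamma''$, belongs to $A$; thus $A$ is all of $\{\gamma\in P:\gamma\subseteq F_0\}$, and $T_0$ is connected.

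I expect the saturation step to be the only real obstacle: it requires care about which circuit of $P$ owns each single transition at a vertex, and about the degenerate case in which both single transitions at $v$ belong to the same circuit (so that $e_v$ is a loop of $Tch(P)$). The rest is bookkeeping with the definitions of circuit partition and touch-graph, together with the elementary fact that in a connected graph a nonempty vertex set with no edges leaving it must be the whole vertex set.
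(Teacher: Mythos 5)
Your argument is correct, and it is genuinely different from the paper's. The paper's proof is short and global: each connected component $F_0$ of $F$ is Eulerian (by $4$-regularity), and pushing an Euler circuit of $F_0$ forward along the map $\pi_P$ gives a single closed walk in $Tch(P)$ that visits $v_\gamma$ for every circuit $\gamma\subseteq F_0$; since these walks are pairwise disjoint and their vertex sets partition $V(Tch(P))$, they exhibit the components of $Tch(P)$ directly. You instead argue locally: you first observe that the map $v\mapsto e_v$ respects the component decomposition of $F$, and then prove connectivity of each piece $T_0$ by a saturation (no-outgoing-edge) argument on $U=\bigcup_{\gamma\in A}\gamma$, using the fact that $P(v)$ covers all four half-edges at each vertex $v\in V(U)$. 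Your route avoids the machinery of $\pi_P$ and the explicit appeal to Euler circuits, at the cost of a somewhat longer bookkeeping step; the paper's route is shorter because it reuses structure ($\pi_P$ on closed walks) already built for Theorem~\ref{funcirc}. Both are correct, and you handle the one delicate point -- the case in which both single transitions of $P(v)$ lie on the same circuit, so that $e_v$ is a loop -- properly.
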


\begin{proof}
As $F$ is 4-regular, every connected component of $F$ has an Euler circuit.
Two vertices of $F$ belong to the same connected component if and only if they
appear on the same one of these Euler circuits. The images of these Euler
circuits under $\pi_{P}$ are closed walks in $Tch(P)$, and two vertices of
$Tch(P)$ belong to the same connected component if and only if they appear on
the same one of these closed walks.
\end{proof}

\begin{definition}
Suppose $C$ is an Euler system for a 4-regular graph $F$, and $v\in V(F)$.
Then the \emph{induced circuits} of $C$ at $v$ are the two closed trails
obtained by following a circuit of $C$ from $v$ to $v$. We denote them
$C_{1}(C,v)$ and $C_{2}(C,v)$, with the indices arbitrary.
\end{definition}

That is, $\{C_{1}(C,v),C_{2}(c,v)\}$ is the circuit partition defined by
$\chi_{C}(v)$ and the transitions $\phi_{C}(w)$, $w\neq v$. The crucial
property of the induced circuits is this:

\begin{theorem}
\label{funcirc}Let $C$ be an Euler system for a 4-regular graph $F$, and let
$\Gamma$ be a set of induced circuits, which includes one of
$C_{1}(C,v),C_{2}(C,v)$ for each $v\in V(F)$. Choose either of the two directions for each $\gamma \in \Gamma$. Then for every circuit partition
$P$ of $F$ and every choice of a digraph $D$ on $Tch(P)$, the set
$S=\{z_{D}(\overline{\gamma})\mid\gamma\in\Gamma\}$ spans the subspace
$Z_{D}(Tch(P))$ of $\mathbb{R}^{E(Tch(P))}$.
\end{theorem}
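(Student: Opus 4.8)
The plan is to deduce Theorem \ref{funcirc} from its $GF(2)$ analogue, the main theorem of \cite{Tnew}, by means of the rank comparison in Lemma \ref{rank}. Write $G=Tch(P)$, and recall that $\left\vert \Gamma\right\vert =\left\vert V(F)\right\vert =\left\vert E(G)\right\vert $. First I would note that each $\overline{\gamma}$ with $\gamma\in\Gamma$ is a closed walk in $G$, so $z_{D}(\overline{\gamma})\in Z_{D}(G)$ by definition; hence the $\mathbb{R}$-span of $S$ is contained in $Z_{D}(G)$. By Theorem \ref{cocycle} together with Proposition \ref{comps}, over any field $\dim Z_{D}(G)=\left\vert E(G)\right\vert -\left\vert V(G)\right\vert +c(G)=\left\vert V(F)\right\vert -\left\vert P\right\vert +c(F)$. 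Since each $z_{D}(\overline{\gamma})$ lies in $\mathbb{Z}^{E(G)}$, Lemma \ref{rank} tells us that the $\mathbb{R}$-rank of $S$ is at least the $GF(2)$-rank of $f(S)$; so it will suffice to show that the $GF(2)$-span of $f(S)$ has dimension at least $\left\vert V(F)\right\vert -\left\vert P\right\vert +c(F)$.

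For that, I would identify the vectors $f(z_{D}(\overline{\gamma}))$ with the rows of $M(C,P)$. For any closed walk $W$ in $F$, $f(z_{D}(\overline{W}))$ is the mod $2$ reduction of $z_{D}(\overline{W})$, so its $e_{w}$-coordinate (for $w\in V(F)$) is, modulo $2$, the number of passages of $\overline{W}$ through $e_{w}$; and by the construction of $\pi_{P}$ this equals, modulo $2$, the number of passages of $W$ through $w$ that are not of type (c). I would then apply this with $W=C_{i}(C,v)$ and run a short case analysis. At a vertex $w\neq v$ the trail $C_{i}(C,v)$ uses single transitions belonging to $\phi_{C}(w)$, so a passage of $C_{i}(C,v)$ through $w$ is of type (c) exactly when $P$ involves $\phi_{C}(w)$; moreover $C_{i}(C,v)$ passes through $w$ twice when $v$ and $w$ lie on a common circuit of $C$ but are not interlaced with respect to $C$ (both appearances of $w$ lie on $C_{i}(C,v)$), once when $v$ and $w$ are interlaced with respect to $C$, and not at all otherwise. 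At $v$ itself the trail $C_{i}(C,v)$ passes through $v$ exactly once, using a single transition belonging to $\chi_{C}(v)$, so that passage is of type (c) exactly when $P$ involves $\chi_{C}(v)$. Reading off the coordinates, I expect to find that $f(z_{D}(\overline{C_{i}(C,v)}))$ has $e_{w}$-coordinate $1$ precisely when either $w\neq v$ and $v,w$ are interlaced with respect to $C$ and $P$ does not involve $\phi_{C}(w)$, or $w=v$ and $P$ involves $\phi_{C}(v)$ or $\psi_{C}(v)$; by Definition \ref{modintmat} this is exactly the row of $M(C,P)$ indexed by $v$, viewed in $GF(2)^{E(G)}$. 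In particular the answer does not depend on whether $\gamma=C_{1}(C,v)$ or $\gamma=C_{2}(C,v)$.

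To finish, since $\Gamma$ contains one of $C_{1}(C,v),C_{2}(C,v)$ for each $v\in V(F)$, the $GF(2)$-span of $f(S)$ contains every row of $M(C,P)$, hence contains the row space of $M(C,P)$, which by the main theorem of \cite{Tnew} equals the cycle space of $G$ over $GF(2)$ --- a space of dimension $\left\vert V(F)\right\vert -\left\vert P\right\vert +c(F)$. This supplies the dimension bound required in the first paragraph, and combined with the inclusion there it yields that the $\mathbb{R}$-span of $S$ equals $Z_{D}(G)$.

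\textbf{Main obstacle.} The step I expect to demand the most care is the vertex-by-vertex bookkeeping in the second paragraph: confirming for each $w\in V(F)$ how many times $C_{i}(C,v)$ passes through $w$ and which single transition it uses there, verifying the exact equivalence between interlacement of $v$ and $w$ with respect to $C$ and the condition that one appearance of $w$ lie on each of $C_{1}(C,v)$ and $C_{2}(C,v)$, and correctly handling both the loop case (both appearances of $w$ on $C_{i}(C,v)$, where the count is $2\equiv0$) and the diagonal case $w=v$, so that the outcome matches $M(C,P)$ entry for entry and is seen to be independent of the choice between $C_{1}$ and $C_{2}$.
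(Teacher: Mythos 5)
Your proof is correct and follows essentially the same route as the paper: note the inclusion $S\subseteq Z_D(Tch(P))$, reduce to the $GF(2)$ situation via Lemma \ref{rank}, and identify the mod-$2$ reductions $f(z_D(\overline{\gamma}))$ with the rows of $M(C,P)$, which gives the required lower bound on rank. The one (minor) difference is which $GF(2)$ fact you invoke to read off the rank of $M(C,P)$: you use the main theorem of \cite{Tnew} (that the row space of $M(C,P)$ is the $GF(2)$-cycle space of $Tch(P)$), while the paper gets the same number more economically from the circuit-nullity formula (Theorem \ref{circuitnul}) applied to the nullity of $f(S)$; both are available and both give the needed dimension. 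Your vertex-by-vertex bookkeeping is more explicit than the paper's, which simply asserts the row identification in the proof of Theorem \ref{funcirc} and postpones the detailed entry computation to Section 3. One small wording caveat: when $v$ and $w$ lie on a common circuit of $C$ but are not interlaced, the fundamental circuit $C_i(C,v)$ passes through $w$ either twice or zero times depending on the choice of $i$, not always twice; you do acknowledge that the mod-$2$ count is independent of $i$, so the argument is fine, but the parenthetical "both appearances of $w$ lie on $C_i(C,v)$" should be qualified.
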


\begin{proof}
Every $\gamma\in\Gamma$ is a directed closed walk in $F$, so $\overline{\gamma}$ is a directed
closed walk in $Tch(P)$. Consequently $S\subseteq Z_{D}(Tch(P))$. To prove
that $S$ spans $Z_{D}(Tch(P))$, it is enough to prove that the rank of $S$ is
at least
\[
\dim Z_{D}(Tch(P))=\left\vert E(Tch(P))\right\vert -\left\vert
V(Tch(P))\right\vert +c(Tch(P)).
\]

Let $f:\mathbb{Z}\rightarrow GF(2)$ be the map of Lemma \ref{rank}. Notice
that $M(C,P)$ is a $GF(2)$-matrix whose rows are the elements $f(s)$ with
$s\in S$, so the circuit-nullity formula over $GF(2)$ tells us that the
nullity of $f(S)$ is $\left\vert P\right\vert -c(F)=\left\vert
V(Tch(P))\right\vert -c(Tch(P))$. As $\left\vert S\right\vert =\left\vert
V(F)\right\vert =\left\vert E(Tch(P)\right\vert $, the rank of $f(S)$ is
$\left\vert f(S)\right\vert -nullity(f(S))=\left\vert E(Tch(P))\right\vert
-\left\vert V(Tch(P))\right\vert +c(Tch(P))$. The proof is completed by Lemma
\ref{rank}, which tells us that the rank of $S$ is not less than the rank of
$f(S)$.
\end{proof}

Theorem \ref{funcirc} tells us that if $\Gamma$ contains one directed induced
circuit for each vertex of $F$, then Theorems \ref{main} and \ref{main2} are
satisfied by the $V(F)\times V(F)$ matrix whose rows are the vectors
$z_{D}(\overline{\gamma})$, $\gamma\in\Gamma$.

\section{A standard form for $M_{\mathbb{R}}(C,P)$}

In this section we describe an $M_{\mathbb{R}}(C,P)$ matrix obtained by using
particular choices in the construction of Section 3. With these choices, all
entries of the matrix lie in $\{-1,0,1,2\}$. Moreover in the special case
involving orientation-consistent circuits, the matrix contains the
$\mathcal{I}(C,P)$ matrix used by Bouchet \cite{Bu}, Jonsson \cite{Jo}, Lauri
\cite{Lau} and Macris and Pul\'{e} \cite{MP}. More details about this special
case are given in Section 8.

Let $C$ be an Euler system of $F$. Arbitrarily choose preferred orientations for the circuits of $C$. For each $v\in V(F)$, let the half-edges of $F$
incident on $v$ be denoted $h_{v}^{1}$, $h_{v}^{2}$, $h_{v}^{3}$ and
$h_{v}^{4}$ in such a way that the circuit of $C$ incident on $v$ is
$...h_{v}^{1},v,h_{v}^{2},...,h_{v}^{3},v,h_{v}^{4},...$. As the incident circuit of $C$ does not have
a preferred starting point, the distinction between the two passages of $C$ through $v$ is arbitrary; we use $+$ and $-$ to distinguish them notationally:
one passage is $h_{v}^{1},v^{+},h_{v}^{2}$ and the other is $h_{v}^{3},v^{-},h_{v}^{4}$. Let $D$ be the directed version of $Tch(P)$ in which the
initial half-edge of the edge $e_{v}$ is $\overline{h_{v}^{1}}$. Index the
induced circuits $C_{1}(C,v),C_{2}(C,v)$ so that $C_{1}(C,v)$ includes $h_{1}^{v}$, and choose the preferred orientation of $C_{1}(C,v)$ consistent with the preferred orientation of the incident circuit of $C$. Let $M_{\mathbb{R}}^{0}(C,P)$ be the $V(F)\times V(F)$ matrix whose $v$
row is $z_{D}(\overline{C_{1}(C,v)})$, for each vertex $v$.

A compact way to encode this information is to write $C$ as a set of double occurrence words, one for each connected component of $F$, and for each vertex $v$, to designate which appearance is $v^{+}$ and which is $v^{-}$. Then for each $v\in V(F)$, the $v$ row of $M_{\mathbb{R}}^{0}(C,P)$ is obtained by tallying the contributions of passages through the vertices encountered as we follow the double occurrence word representing the incident circuit of $C$, from $v^{-}$ to $v^{+}$. We proceed to calculate the resulting entries $M_{\mathbb{R}}^{0}(C,P)_{vw}$.

Suppose $v\in V(F)$. The circuit $C_{1}(C,v)$ includes the passage $h_{v}^{1},v,h_{v}^{4}$ and no other passage through $v$. If $\phi _{C}(v)=P(v)$ then the initial half-edge of $e_{v}$ is $\overline{h_{v}^{1}}=\overline{h_{v}^{2}}$, and the terminal half-edge is $\overline{h_{v}^{3}}=\overline{h_{v}^{4}}$, so $\overline{C_{1}(C,v)}$ traverses $e_{v}$ in the
positive direction. If $\chi_{C}(v)=P(v)$ then the initial half-edge of $e_{v}$ is $\overline{h_{v}^{1}}=\overline{h_{v}^{4}}$, and the terminal
half-edge is $\overline{h_{v}^{2}}=\overline{h_{v}^{3}}$, so $\overline {C_{1}(C,v)}$ does not traverse $e_{v}$. If $\psi_{C}(v)=P(v)$ then the initial half-edge of $e_{v}$ is $\overline{h_{v}^{1}}=\overline{h_{v}^{3}}$,
and the terminal half-edge is $\overline{h_{v}^{2}}=\overline{h_{v}^{4}}$, so
$\overline{C_{1}(C,v)}$ traverses $e_{v}$ in the positive direction. We have the following.%

\[
M_{\mathbb{R}}^{0}(C,P)_{vv}=%
\begin{cases}
1\text{,} & \text{if }P(v)\in\{\phi_{C}(v),\psi_{C}(v)\}\\
0\text{,} & \text{if }P(v)=\chi_{C}(v)
\end{cases}
\]

Now, suppose $v\neq w\in V(F)$. If $\phi_{C}(w)=P(w)$ then any passage of
$C_{1}(C,v)$ through $w$ contributes $0$ to $M_{\mathbb{R}}^{0}(C,P)_{vw}$. If
$\chi_{C}(w)=P(w)$ then the initial half-edge of $e_{w}$ is $\overline
{h_{w}^{1}}=\overline{h_{w}^{4}}$, and the terminal half-edge is
$\overline{h_{w}^{2}}=\overline{h_{w}^{3}}$. Consequently if $\overline
{C_{1}(C,v)}$ includes the passage $\overline{h_{w}^{1}},\overline
{h_{w}^{2}}$ then this passage contributes $1$ to $M_{\mathbb{R}}%
^{0}(C,P)_{vw}$; and if $\overline{C_{1}(C,v)}$ includes the passage
$\overline{h_{w}^{3}},\overline{h_{w}^{4}}$ then this passage
contributes $-1$ to $M_{\mathbb{R}}^{0}(C,P)_{vw}$. If $\psi_{C}(w)=P(w)$ then
the initial half-edge of $e_{w}$ is $\overline{h_{w}^{1}}=\overline{h_{w}^{3}%
}$, and the terminal half-edge is $\overline{h_{w}^{2}}=\overline{h_{w}^{4}}$.
Consequently if $\overline{C_{1}(C,v)}$ includes the passage $\overline
{h_{w}^{1}},\overline{h_{w}^{2}}$ then this passage contributes $1$ to
$M_{\mathbb{R}}^{0}(C,P)_{vw}$; and if $\overline{C_{1}(C,v)}$ includes the
passage $\overline{h_{w}^{3}},\overline{h_{w}^{4}}$ then this passage
also contributes $1$ to $M_{\mathbb{R}}^{0}(C,P)_{vw}$. In sum, for $v \neq w \in V(F)$ we have the
following.
\begin{gather*}
M_{\mathbb{R}}^{0}(C,P)_{vw}=\\%
\begin{cases}
0\text{,} & \text{if }v\text{ and }w\text{ lie in different connected
components of }F\\
0\text{,} & \text{if }\phi_{C}(w)=P(w)\\
0\text{,} & \text{if }\chi_{C}(w)=P(w)\text{ and }v\text{ and }w\text{ are not
interlaced with respect to }C\\
1\text{,} & \text{if }\chi_{C}(w)=P(w)\text{ and a circuit of }C\text{ is
}v^{-}...w^{+}...v^{+}...w^{-}...\\
-1\text{,} & \text{if }\chi_{C}(w)=P(w)\text{ and a circuit of }C\text{ is
}v^{-}...w^{-}...v^{+}...w^{+}...\\
1\text{,} & \text{if }\psi_{C}(w)=P(w)\text{ and }v\text{ and }w\text{ are
interlaced with respect to }C\\
0\text{,} & \text{if }\psi_{C}(w)=P(w)\text{ and a circuit of }C\text{ is
}v^{-}...v^{+}...w...w...\\
2\text{,} & \text{if }\psi_{C}(w)=P(w)\text{ and a circuit of }C\text{ is
}v^{-}...w...w...v^{+}...
\end{cases}
\end{gather*}

The reader will have no trouble verifying the following properties of
$M_{\mathbb{R}}^{0}(C,P)$. Suppose we let $V(F)=V_{\phi}\cup V_{\chi}\cup
V_{\psi}$, in such a way that $v\in V_{\alpha}$ if and only if $\alpha
_{C}(v)=P(v)$. Then $M_{\mathbb{R}}^{0}(C,P)$ is%
\[
\bordermatrix{
& V_{\phi} & V_{\chi} & V_{\psi} \cr
V_{\phi} & I & M_1 & M_2 \cr
V_{\chi} & 0 & M_3 & M_4 \cr
V_{\psi} & 0 & M_5 & M_6 \cr
}\text{,}%
\]
where the indicated submatrices have the following properties. $I$ is an
identity matrix, the entries of $M_{1}$ all lie in $\{-1,0,1\}$, and the
entries of $M_{2}$ all lie in $\{0,1,2\}$. $M_{3}$ is a skew-symmetric matrix
with entries in $\{-1,0,1\}$. (In the special case $V_{\psi}=\allowbreak
\varnothing$, $M_{3}$ is the matrix $\mathcal{I}(C,P)$ used by Bouchet
\cite{Bu} (when $V_{\phi}$ is empty), Jonsson \cite{Jo}, Lauri \cite{Lau} and
Macris and Pul\'{e} \cite{MP}.) $M_{4}$ has entries from $\{0,1,2\}$ and
$M_{5}$ has entries from $\{-1,0,1\}$. There is a limited symmetry connecting
$M_{4}$ and $M_{5}$: if the $vw$ entry of $M_{4}$ is $0$ or $2$ then the $wv$
entry of $M_{5}$ is $0$; and if the $vw$ entry of $M_{4}$ is $1$ then the $wv$
entry of $M_{5}$ is $1$ or $-1$. $M_{6}$ has diagonal entries equal to $1$ and
all other entries from $\{0,1,2\}$; it reduces (mod $2$) to a symmetric
matrix. Interchanging the appearances of $v^{-}$ and $v^{+}$ on $C$ produces
three changes in $M_{\mathbb{R}}^{0}(C,P)$: if $P(v)=\chi_{C}(v)$ then the $v$
column of $M_{\mathbb{R}}^{0}(C,P)$ is multiplied by $-1$; if $P(w)=\chi
_{C}(w)$ then $M_{\mathbb{R}}^{0}(C,P)_{vw}$ is multiplied by $-1$; and if
$P(w)=\psi_{C}(w)$ then $M_{\mathbb{R}}^{0}(C,P)_{vw}$ is changed by the
replacement $0\leftrightarrow2$. Notice that all three changes have no effect
modulo $2$, reflecting the fact that $M(C,P)$ is a uniquely defined matrix
over $GF(2)$. Notice also that if $P$ does not involve any $\psi_{C}$
transition then the third kind of change does not occur, so the effect of
interchanging $v^{-}$ and $v^{+}$ on $C$ can be described using elementary row
and column operations; this special case is detailed in Section 8.

\section{Four examples}

Our first example illustrates the fact that if $C$ and $P$ do not respect the
same edge directions, it may be that there is no skew-symmetric matrix that
reduces to $\mathcal{I}(C,P)$ (mod $2$) and has nullity $\left\vert
P\right\vert -c(F)$.

Let $F$ be the 4-regular graph with $V(F)=\{a,b,c\}$ that is obtained from
$K_{3}$ by doubling edges. Then $F$ has an\ Euler circuit described by the
double occurrence word $abcabc$.\ We will use the standard form $M_{\mathbb{R}%
}^{0}(C,P)$ corresponding to $a^{+}b^{-}c^{+}a^{-}b^{+}c^{-}$, and the natural
notation for edges of $F$, e.g., the two edges connecting $a$ to $b$ are
$a^{+}b^{-}=b^{-}a^{+}$ and $a^{-}b^{+}=b^{+}a^{-}$. Let $P$ be the circuit
partition that includes $\gamma_{1}=\{a^{+}b^{-},a^{-}b^{+}\}$, $\gamma
_{2}=\{a^{+}c^{-},a^{-}c^{+}\}$ and $\gamma_{3}=\{b^{+}c^{-},b^{-}c^{+}\}$.
Then $Tch(P)\cong K_{3}$. Let $D$ be the oriented version of $Tch(P)$ used in
Section 4: $e_{a}$ is directed from $v_{\gamma_{2}}$ to $v_{\gamma_{1}}$,
$e_{b}$ is directed from $v_{\gamma_{1}}$ to $v_{\gamma_{3}}$ and $e_{c}$ is
directed from $v_{\gamma_{3}}$ to $v_{\gamma_{2}}$. Then $Z_{D}(Tch(P))$ is
spanned by the vector $(1,1,1)$.

$P$ involves the $\psi_{C}$ transition at every vertex, so
\[
\mathcal{I}(C,P)=M(C,P)=%
\begin{pmatrix}
1 & 1 & 1\\
1 & 1 & 1\\
1 & 1 & 1
\end{pmatrix}
.
\]
The $GF(2)$-nullity of $M(C,P)$ is 2, as predicted by the circuit-nullity
formula, and the rows of $M(C,P)$ span the cycle space $Z_{D}(Tch(P))$ over
$GF(2)$.

It is a simple matter to check that every skew-symmetric version of $M(C,P)$
is of nullity $0$ or $1$ over $\mathbb{R}$, so the circuit-nullity formula
over $\mathbb{R}$ is not satisfied by any skew-symmetric version of $M(C,P)$.
However the definition of Section 4 yields%

\[
M_{\mathbb{R}}^{0}(C,P)=%
\begin{pmatrix}
1 & 1 & 1\\
1 & 1 & 1\\
1 & 1 & 1
\end{pmatrix}
.
\]
The nullity of $M_{\mathbb{R}}^{0}(C,P)$ is $2$, and the row space of
$M_{\mathbb{R}}^{0}(C,P)$ is $Z_{D}(Tch(P))$.

Our second example illustrates Theorem \ref{main} for the standard form of
Section 4. Let $F$ be the simple 4-regular graph with
$V(F)=\{a,b,c,d,e,f,g,h\}$ and Euler circuit $C$ given by the signed double
occurrence word
\[
e^{-}a^{-}b^{-}f^{-}e^{+}h^{-}g^{-}f^{+}a^{+}d^{-}h^{+}c^{-}b^{+}g^{+}%
c^{+}d^{+}.
\]
Consider the circuit partition $P$ that involves the $\phi_{C}(a)$, $\chi
_{C}(e)$ and $\chi_{C}(g)$ transitions, along with the $\psi_{C}$ transition
at every other vertex. Then $P$ includes four circuits: $\gamma_{1}=\{ab$,
$bc$, $cd$, $da$, $af$, $fe$, $ea\}$, $\gamma_{2}=\{bf$, $fg$, $gb\}$,
$\gamma_{3}=\{ch$, $hg$, $gc\}$ and $\gamma_{4}=\{de$, $eh$, $hd\}$. The
construction of Section 4 yields the directed version of $Tch(P)$ illustrated
in Figure \ref{signinf3}, and the matrix
\[
M_{\mathbb{R}}^{0}(C,P)=%
\begin{pmatrix}
1 & 1 & 0 & 0 & 1 & 2 & -1 & 1\\
0 & 1 & 1 & 1 & 1 & 2 & -1 & 2\\
0 & 1 & 1 & 0 & 0 & 0 & 1 & 0\\
0 & 1 & 2 & 1 & 0 & 0 & 1 & 1\\
0 & 1 & 0 & 0 & 0 & 1 & 0 & 0\\
0 & 0 & 0 & 0 & 1 & 1 & -1 & 1\\
0 & 1 & 1 & 1 & 0 & 1 & 0 & 1\\
0 & 0 & 0 & 1 & 0 & 1 & -1 & 1
\end{pmatrix}
.
\]

\begin{figure}[ptb]%
\centering
\includegraphics[
trim=3.071593in 7.550942in 2.481066in 1.656343in,
height=1.5567in,
width=2.5287in
]%
{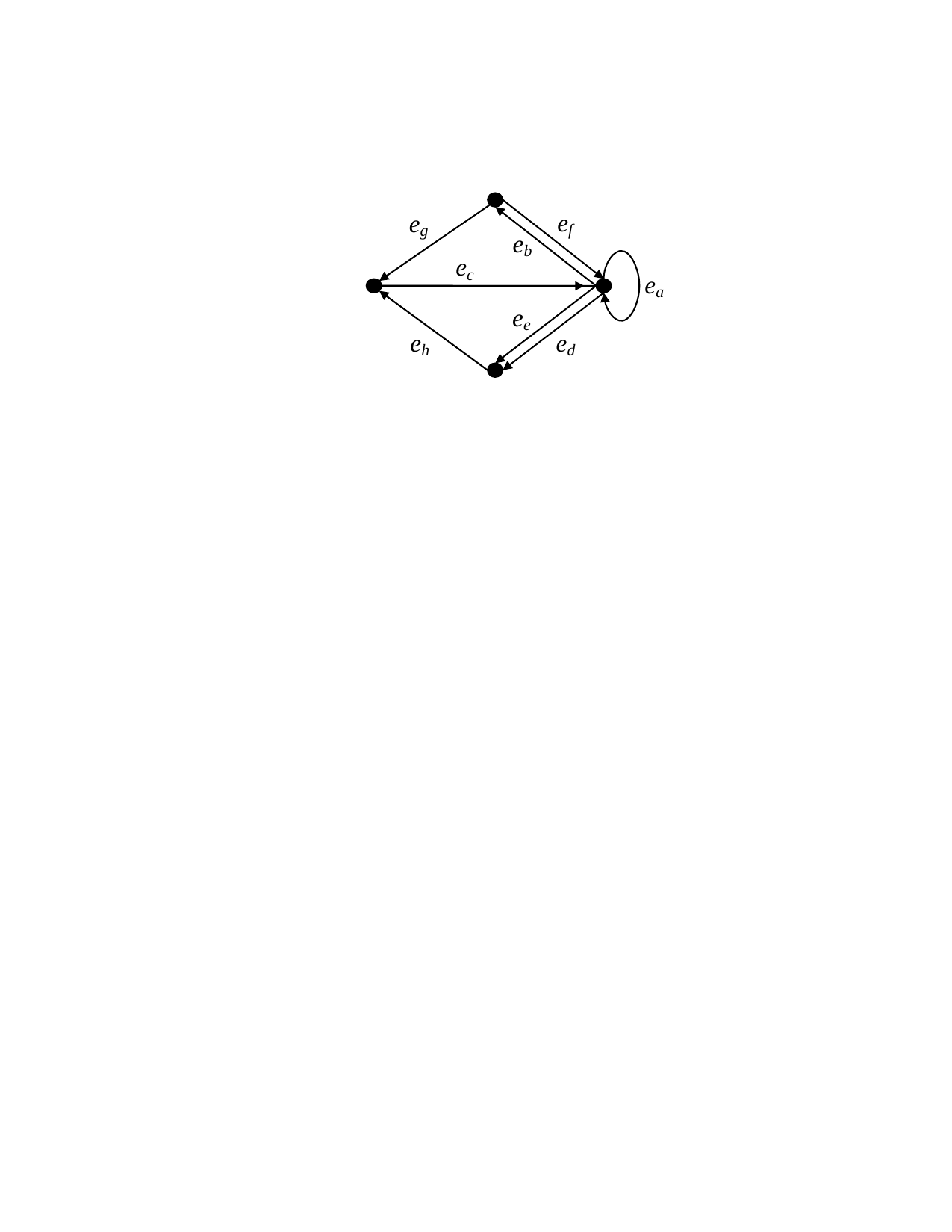}
\caption{The directed touch-graph from the second example.}%
\label{signinf3}%
\end{figure}

It is not hard to see that the rows of $M_{\mathbb{R}}^{0}(C,P)$ span the
cycle space of $Tch(P)$ over $\mathbb{R}$. Some rows represent individual
circuits, like $e_{b}+e_{f}$ (the fifth row) or $e_{e}+e_{f}-e_{g}+e_{h}$ (the
sixth row); other rows represent combinations of circuits, like $e_{a}%
+(e_{b}+e_{f})+\ (e_{e}+e_{f}-e_{g}+e_{h})$ (the first row). Also,
$M_{\mathbb{R}}^{0}(C,P)$ reduces to $M(C,P)$ (mod 2), and the product
$M_{\mathbb{R}}^{0}(C,P)\cdot U_{V(Tch(P))}$ is
\[%
\begin{pmatrix}
1 & 1 & 0 & 0 & 1 & 2 & -1 & 1\\
0 & 1 & 1 & 1 & 1 & 2 & -1 & 2\\
0 & 1 & 1 & 0 & 0 & 0 & 1 & 0\\
0 & 1 & 2 & 1 & 0 & 0 & 1 & 1\\
0 & 1 & 0 & 0 & 0 & 1 & 0 & 0\\
0 & 0 & 0 & 0 & 1 & 1 & -1 & 1\\
0 & 1 & 1 & 1 & 0 & 1 & 0 & 1\\
0 & 0 & 0 & 1 & 0 & 1 & -1 & 1
\end{pmatrix}
\cdot%
\begin{pmatrix}
0 & 0 & 0 & 0\\
1 & -1 & 0 & 0\\
-1 & 0 & 1 & 0\\
1 & 0 & 0 & -1\\
1 & 0 & 0 & -1\\
-1 & 1 & 0 & 0\\
0 & 1 & -1 & 0\\
0 & 0 & -1 & 1
\end{pmatrix}
=0.
\]
Notice that if we add $-2$ times the third row of $M_{\mathbb{R}}^{0}(C,P)$ to
the fourth row, and add $-2$ times the sixth row to each of the first two
rows, then the resulting matrix has the same reduction (mod $2$) and the same
row space as $M_{\mathbb{R}}^{0}(C,P)$, and its entries are all in
$\{-1,0,1\}$. We do not know whether it is always possible to eliminate
entries outside $\{-1,0,1\}$ in this way.

Our third example involves two Euler circuits of $K_{5}$: $C$ is given by the
double occurrence word $abdcaecbed$ and $C^{\prime}$ is given by the double
occurrence word $abcdecadbe$.\ On the left below is the $M_{\mathbb{R}}%
^{0}(C,C^{\prime})$ matrix for the signed version $a^{-}b^{-}d^{-}c^{-}%
a^{+}e^{-}c^{+}b^{+}e^{+}d^{+}$ of $C$; its inverse appears on the right. (We
abuse notation slightly by writing $M_{\mathbb{R}}^{0}(C,C^{\prime})$ rather
than $M_{\mathbb{R}}^{0}(C,\{C^{\prime}\})$.)
\[%
\begin{pmatrix}
{\ 1} & {1} & {-1} & {-1} & {0}\\
{\ 1} & {1} & {\ 0} & {-1} & {1}\\
{\ 1} & {0} & {\ 0} & {\ 0} & {1}\\
{\ 1} & {1} & {\ 0} & {\ 0} & {2}\\
{\ 0} & {1} & {\ 1} & {\ 0} & {1}%
\end{pmatrix}
^{-1}=%
\begin{pmatrix}
1 & -1 & 2 & -1 & 1\\
1 & -1 & 0 & 0 & 1\\
0 & 0 & 1 & -1 & 1\\
1 & -2 & 1 & 0 & 1\\
-1 & 1 & -1 & 1 & -1
\end{pmatrix}
\]
The inverse matrix is an $M_{\mathbb{R}}(C^{\prime},C)$ matrix, though it is
not in standard form. The $M_{\mathbb{R}}^{0}(C,C^{\prime})$ matrix for the
signed version $a^{-}b^{+}d^{+}c^{-}a^{+}e^{-}c^{+}b^{-}e^{+}d^{-}$ of $C$ is
on the left below; its inverse appears on the right.%

\[%
\begin{pmatrix}
{\ 1} & {1} & {-1} & {\ 1} & {0}\\
{\ 1} & {1} & {\ 0} & {-1} & {\ 1}\\
{\ 1} & {0} & {\ 0} & {\ 0} & {\ 1}\\
{\ 1} & {1} & {\ 0} & {\ 0} & {\ 0}\\
{\ 0} & {1} & {\ 1} & {\ 0} & {\ 1}%
\end{pmatrix}
^{-1}=\left(  \frac{1}{3}\right)
\begin{pmatrix}
-1 & -1 & 2 & 3 & -1\\
1 & 1 & -2 & 0 & 1\\
-2 & -2 & 1 & 3 & 1\\
1 & -2 & 1 & 0 & 1\\
1 & 1 & 1 & -3 & 1
\end{pmatrix}
\allowbreak\allowbreak
\]
$\allowbreak$In this case $M_{\mathbb{R}}^{0}(C,C^{\prime})^{-1}$ is not a
matrix of integers, so it is certainly not an $M_{\mathbb{R}}(C^{\prime},C)$
matrix; but $3\cdot M_{\mathbb{R}}^{0}(C,C^{\prime})^{-1}$ is an
$M_{\mathbb{R}}(C^{\prime},C)$ matrix. Also, $\det M_{\mathbb{R}}%
^{0}(C,C^{\prime})=3$ tells us that the rows of $M_{\mathbb{R}}^{0}%
(C,C^{\prime})$ generate a proper subgroup of $\mathbb{Z}^{E(Tch(C^{\prime}%
))}$. Every edge of $Tch(C^{\prime})$ is a loop, though, so the cycle space of
$Tch(C^{\prime})$ includes all of $\mathbb{Z}^{E(Tch(C^{\prime}))}$.

Our fourth example includes $C^{\prime}$ and another Euler circuit
$C^{\prime\prime}$ of $K_{5}$, given by the double occurrence word
$abecdbcade$. Using the signed form $a^{-}b^{+}c^{-}d^{+}e^{-}c^{+}a^{+}%
d^{-}b^{-}e^{+}$ of $C^{\prime}$, we obtain%

\[
M_{\mathbb{R}}^{0}(C^{\prime},C^{\prime\prime})=%
\begin{pmatrix}
{\ 1} & {1} & {\ 0} & {1} & {-1}\\
{\ 0} & {0} & {\ 0} & {0} & {\ 1}\\
{\ 0} & {0} & {\ 0} & {1} & {-1}\\
{\ 0} & {0} & {-1} & {0} & {\ 1}\\
{\ 0} & {-1} & {\ 1} & {-1} & {\ 0}%
\end{pmatrix}
\text{.}%
\]
Using the signed form $a^{+}b^{-}e^{-}c^{-}d^{-}b^{+}c^{+}a^{-}d^{+}e^{+}$ of
$C^{\prime\prime}$, we obtain%
\[
M_{\mathbb{R}}^{0}(C^{\prime\prime},C^{\prime})=%
\begin{pmatrix}
1 & 0 & 0 & 1 & 1\\
0 & 0 & -1 & -1 & -1\\
0 & 1 & 0 & -1 & 0\\
0 & 1 & 1 & 0 & 0\\
0 & 1 & 0 & 0 & 0
\end{pmatrix}
=M_{\mathbb{R}}^{0}(C^{\prime},C^{\prime\prime})^{-1}\text{.}%
\]

\section{The effect of a $\kappa$-transformation}

The fundamental operation of the theory of Euler systems of 4-regular graphs
was introduced by Kotzig \cite{K}.

\begin{definition}
\label{kappat}If $C$ is an\ Euler system of a 4-regular graph $F$ and $v\in
V(F)$ then a $\kappa$\emph{-transform} $C\ast v$ is an Euler system obtained
from $C$ by reversing one of the induced circuits $C_{i}(C,v)$ within a
circuit of $C$.
\end{definition}

If $C$ is given with preferred orientations for its circuits, then Definition \ref{kappat} provides two choices for the preferred orientation of the circuit of $C\ast v$ incident at $v$. For instance, if $C$ is the directed Euler circuit of $K_{5}$ given by the double occurrence word $abdcaecbed$ then $C\ast a$ is given by $acdbaecbed$ or $abdcadebce$. 

Kotzig \cite{K} proved that if $C$ and $C^{\prime}$ are any two\ Euler systems of $F$ then there is a
sequence $v_{1},...,v_{k}$ of vertices of $F$ such that $C^{\prime}=C\ast
v_{1}\ast\cdots\ast v_{k}$. We refer to this fundamental result as
\emph{Kotzig's theorem}.

It is not hard to see that the effect of a $\kappa$-transformation on
transition labels is given by the following.

\begin{proposition}
\label{kappalabel}Transition labels with respect to $C$ and $C\ast v$ differ
only in these two ways.

\begin{itemize}
\item $\phi_{C}(v)=\psi_{C\ast v}(v)$ and $\psi_{C}(v)=\phi_{C\ast v}(v)$.

\item If $w$ is interlaced with $v$ then $\chi_{C}(w)=\psi_{C\ast v}(w)$ and
$\psi_{C}(w)=\chi_{C\ast v}(w)$.
\end{itemize}
\end{proposition}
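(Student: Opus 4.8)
The plan is to read off each transition label from the initial/terminal data of a circuit traversal. At a vertex $u$ with incident half-edges $h_u^1,\dots,h_u^4$, a transition splits these four half-edges into two pairs; one of the three transitions, $\phi_D(u)$, is the one followed by the Euler system $D$, and among the remaining two the \emph{consistent} one---the one that puts an initial half-edge with a terminal half-edge in each pair, relative to a traversal of the incident circuit of $D$---is $\chi_D(u)$, while the other (pairing the two initial half-edges together and the two terminal half-edges together) is $\psi_D(u)$. Reversing the traversal direction of a whole circuit exchanges ``initial'' and ``terminal'' at all of its half-edges simultaneously, hence fixes the consistency status of every transition; this is why transition labels are unaffected when entire circuits are reoriented, as noted in the introduction. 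Together with the comment after Definition \ref{kappat}, this lets us analyze whichever of the two permitted Euler systems we prefer as $C\ast v$.

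First I would fix the circuit of $C$ through $v$, written $\dots h_v^1,v,h_v^2,\dots,h_v^3,v,h_v^4,\dots$, so that the fundamental circuits are the arc $A$ running from $h_v^2$ to $h_v^3$ and the arc $B$ running from $h_v^4$ to $h_v^1$, and take $C\ast v$ to reverse $B$. The circuit of $C\ast v$ through $v$ then traverses $A$ in the $C$-direction and $B$ in the reverse direction, so at $v$ it follows the transition pairing $\{h_v^1,h_v^3\}$ with $\{h_v^2,h_v^4\}$, which is $\psi_C(v)$; hence $\phi_{C\ast v}(v)=\psi_C(v)$, one half of the first bullet. To get the other half and the implicit fact $\chi_{C\ast v}(v)=\chi_C(v)$, I would record the initial/terminal data: at $v$ the $C$-traversal has $h_v^2,h_v^4$ initial and $h_v^1,h_v^3$ terminal, while the $C\ast v$-traversal has $h_v^1,h_v^2$ initial and $h_v^3,h_v^4$ terminal (the reversed passage through $B$ flips $h_v^1$ and $h_v^4$); testing the two transitions other than $\phi_{C\ast v}(v)$ for consistency then yields $\psi_{C\ast v}(v)=\phi_C(v)$ and $\chi_{C\ast v}(v)=\chi_C(v)$.

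For a vertex $w\neq v$, the key point is that $C\ast v$ reverses the traversal of $B$ and leaves $A$---and every other circuit of $C$---untouched, so a given passage of $w$ has its initial/terminal labels flipped exactly when that passage lies in $B$. If $v$ and $w$ lie on different circuits of $C$, or on the same circuit but are not interlaced with respect to $C$, then both passages of $w$ lie in a single arc (or $w$ does not occur on the circuit through $v$ at all): zero or two of them are flipped, so either nothing changes or the status flips at all four half-edges of $w$ at once, and in either case $\phi_C(w)$, $\chi_C(w)$, $\psi_C(w)$ are unchanged. If $w$ is interlaced with $v$---which, after cutting the circuit of $C$ at its two passages through $v$, is precisely the statement that one passage of $w$ lies in $A$ and the other in $B$---then exactly one passage of $w$ is flipped, so the status changes at exactly the two half-edges forming one pair of $\phi_C(w)$. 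A finite check (four half-edges, the labels on one of the two pairs flipped) shows $\phi_C(w)$ remains a valid transition and stays $\phi_{C\ast v}(w)$, while the consistent and the inconsistent of the other two transitions are interchanged: $\chi_C(w)=\psi_{C\ast v}(w)$ and $\psi_C(w)=\chi_{C\ast v}(w)$. This is the second bullet, and since these are the only changes, the proposition follows.

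The routine-but-attention-demanding part is the initial/terminal bookkeeping: verifying that ``one passage of $w$ in $A$, one in $B$'' is exactly the condition that $w$ is interlaced with $v$, and that reversing an arc flips precisely the passages it contains. I do not foresee a real obstacle---the whole argument is a small finite case analysis organized around the four half-edges at each vertex, together with the orientation-invariance of transition labels.
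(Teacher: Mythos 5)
The paper offers no proof of this proposition — it is introduced with "It is not hard to see that…" and asserted without a proof block — so there is no text to compare against. Your argument is correct and is the natural elaboration of that omitted bookkeeping: you label each transition at a vertex by how it pairs initial and terminal half-edges of the incident circuit traversal, observe that reversing an arc $B$ flips the initial/terminal status of precisely the half-edges of passages lying in $B$, and then split into cases by whether a passage of $w$ lies in $A$ or $B$. Your identification of "one passage in $A$, one in $B$" with interlacement is exactly right, as is the check that flipping both passages (or neither) preserves all three labels while flipping exactly one swaps $\chi$ and $\psi$. The opening remark that transition labels are orientation-invariant, which licenses choosing either of the two $C\ast v$'s, is a useful tidying touch. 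No gaps.
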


Recall that if we are given $C$ and $P$, $M(C,P)$ is the matrix over $GF(2)$
specified in Definition \ref{modintmat}. Proposition \ref{kappalabel} implies
the following three properties, which we describe collectively as
\textquotedblleft naturality\textquotedblright\ of $M(C,P)$\ with respect to
$\kappa$-transformations. See \cite{Tnew} for a detailed discussion. (Special
cases of the third property appear also in earlier work of Bouchet \cite{Bu}
and Jaeger \cite{J1}.)

\begin{corollary}
\label{kappachange}(\cite{Tnew}) If $P$ is a circuit partition of $F$ and
$C,C^{\prime}$ are Euler systems of $F$ then the following properties hold.

\begin{enumerate}
\item If $v\in V(F)$ then $M(C\ast v,P)$ is obtained from $M(C,P)$ by adding
the $v$ row to the $w$ row whenever $w\neq v$ and $w$\ is interlaced with $v$
on $C$.

\item $M(C^{\prime},P)=M(C^{\prime},C)\cdot M(C,P)$.

\item $M(C,C^{\prime})=M(C^{\prime},C)^{-1}$.
\end{enumerate}
\end{corollary}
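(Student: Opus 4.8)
The plan is to prove part 1 by a direct entry-by-entry computation and then deduce parts 2 and 3 from it using Kotzig's theorem and a little matrix algebra. Part 1 rests on two inputs. The first is Proposition \ref{kappalabel}, which records how the transition labels of $P$ change when we pass from $C$ to $C\ast v$. The second is the standard fact that $\mathcal{I}(C\ast v)$ is the \emph{local complement} of $\mathcal{I}(C)$ at $v$: for $x\neq y$ one has $\mathcal{I}(C\ast v)_{xy}=\mathcal{I}(C)_{xy}+\mathcal{I}(C)_{xv}\mathcal{I}(C)_{vy}$, while the (zero) diagonal is unchanged. This is most transparent from a double occurrence word for the circuit of $C$ through $v$: a $\kappa$-transform reverses one of the two arcs cut out by the two occurrences of $v$, and reversing an arc of a cyclic word preserves every interlacement except that it toggles the interlacement of any two vertices that each have exactly one occurrence inside the reversed arc — that is, exactly the pairs of vertices that are both interlaced with $v$.

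Granting these, I would first record the two $GF(2)$ formulas that unwind Definition \ref{modintmat} (writing $[\,\cdot\,]$ for the $\{0,1\}$-valued indicator): $M(C,P)_{ww}=1+[P(w)=\chi_C(w)]$, and for $w\neq w'$, $M(C,P)_{ww'}=[P(w')\neq\phi_C(w')]\cdot\mathcal{I}(C)_{ww'}$. Then I would compute $M(C\ast v,P)_{ww'}$ for each pair, splitting into cases according to whether $w$ or $w'$ equals $v$ and whether $w,w'$ are interlaced with $v$ on $C$. Using $\mathcal{I}(C)_{vw}=[w\text{ interlaced with }v]$ and the identity $[P(w)\neq\phi_C(w)]=[P(w)=\chi_C(w)]+[P(w)=\psi_C(w)]$, each case reduces in a line or two to the statement that row $w$ of $M(C\ast v,P)$ equals row $w$ of $M(C,P)$ when $w$ is not interlaced with $v$, and equals the sum of rows $w$ and $v$ of $M(C,P)$ when $w\neq v$ is interlaced with $v$ — which is exactly part 1.

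For parts 2 and 3 I would restate part 1 as $M(C\ast v,P)=E_v(C)\,M(C,P)$, where $E_v(C)=I+\sum_{w}e_{wv}$ (sum over $w\neq v$ interlaced with $v$ on $C$) is a matrix depending on $C$ and $v$ but \emph{not} on $P$. Writing $C'=C\ast v_1\ast\cdots\ast v_k$ by Kotzig's theorem and iterating gives $M(C',P)=T\,M(C,P)$, where $T=E_{v_k}(C\ast v_1\ast\cdots\ast v_{k-1})\cdots E_{v_1}(C)$ is still independent of $P$. Specializing $P=C$ and noting that $M(C,C)=I$ — relative to $C$ every transition of $C$ is a $\phi_C$ transition, so Definition \ref{modintmat} turns $\mathcal{I}(C)$ into the identity — yields $T=M(C',C)$; this is part 2, and it shows en route that $T$ does not depend on the chosen $\kappa$-sequence. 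Specializing instead $P=C'$ in part 2 gives $M(C',C)\,M(C,C')=M(C',C')=I$, so over the field $GF(2)$ the square matrix $M(C,C')$ is the two-sided inverse of $M(C',C)$, which is part 3.

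The one delicate point I anticipate is the case analysis for part 1 in row and column $v$ itself. There the swap $\phi_C(v)\leftrightarrow\psi_{C\ast v}(v)$ at $v$ interacts with the change in the diagonal entry $M(C,P)_{vv}$, and one must check that this swap — together with the $\chi\leftrightarrow\psi$ swaps at the neighbors of $v$ and the toggling of adjacencies among those neighbors — reproduces precisely the stated row operation, and in particular leaves row $v$ of $M(C,P)$ unchanged. Everything else is routine bookkeeping with $GF(2)$ indicators.
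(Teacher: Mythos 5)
Your proof is correct and takes essentially the same route as the paper: part 1 is a direct consequence of Proposition \ref{kappalabel} together with the local-complementation formula for $\mathcal{I}(C\ast v)$, and parts 2 and 3 follow by iterating part 1 along a Kotzig sequence, observing that the resulting row-operation matrix is independent of $P$, and specializing $P=C$ (to identify the matrix as $M(C',C)$) and $P=C'$ (to get the inverse relation). The paper states part 1 more tersely and packages the iteration as row-reducing the double matrix $\bigl[\,I \mid M(C,P)\,\bigr]$ to $\bigl[\,M(C',C)\mid M(C',P)\,\bigr]$, but this is the same computation you perform; the extra detail you supply in part 1 (the $GF(2)$ indicator identities and the check that row $v$ is fixed) is a welcome fill-in of what the paper leaves implicit.
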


\begin{proof}
The first assertion follows from Proposition \ref{kappalabel}.

For the second property recall that by Kotzig's theorem, there is a sequence
$v_{1},...,v_{k}$ of vertices of $F$ such that $C^{\prime}=C\ast v_{1}%
\ast\cdots\ast v_{k}$. The first property tells us that this sequence of
$\kappa$-transformations induces a corresponding sequence of elementary row
operations, which transforms the double matrix%
\[%
\begin{pmatrix}
I=M(C,C) & M(C,P)
\end{pmatrix}
\]
into the double matrix%
\[%
\begin{pmatrix}
M(C^{\prime},C) & M(C^{\prime},P)
\end{pmatrix}
\text{.}%
\]
It follows that if $E$ is the product of elementary matrices corresponding to
the induced elementary row operations, then $E\cdot I=M(C^{\prime},C)$ and
$E\cdot M(C,P)=M(C^{\prime},P)$.

For the third property, notice that the second property tells us that
$I=M(C^{\prime},C^{\prime})=M(C^{\prime},C)\cdot M(C,C^{\prime})$.
\end{proof}

Over $\mathbb{R}$, in contrast, we do not have a uniquely defined
$M_{\mathbb{R}}(C,P)$ matrix. Consequently the naturality\ properties of
$M_{\mathbb{R}}(C,P)$ over $\mathbb{R}$ are less precise than the properties
of Corollary \ref{kappachange}.

\begin{corollary}
\label{realchange} If $C$ and $C^{\prime}$ are Euler systems of $F$ then the
following properties hold.

\begin{enumerate}
\item Every $M_{\mathbb{R}}(C,C^{\prime})$ matrix is nonsingular, and has the
property that%
\[
\left(  \det M_{\mathbb{R}}(C,C^{\prime})\right)  \cdot M_{\mathbb{R}%
}(C,C^{\prime})^{-1}%
\]
is an $M_{\mathbb{R}}(C^{\prime},C)$ matrix.

\item Let $P$ be a circuit partition of $F$. Given an $M_{\mathbb{R}%
}(C^{\prime},C)$ matrix and an $M_{\mathbb{R}}(C,P)$ matrix, the product%
\[
M_{\mathbb{R}}(C^{\prime},C)\cdot M_{\mathbb{R}}(C,P)
\]
is an $M_{\mathbb{R}}(C^{\prime},P)$ matrix.
\end{enumerate}
\end{corollary}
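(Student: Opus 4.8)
The plan is to reduce both assertions to three inputs: a structural observation about $Tch(C')$, the fact that reduction modulo $2$ commutes with the matrix operations in play, and the $GF(2)$ statements already recorded in Corollary \ref{kappachange}.

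First I would pin down what an $M_{\mathbb{R}}(C,C')$ matrix is. When the circuit partition is an Euler system $C'$ itself, each circuit of $C'$ is an Euler circuit of a connected component of $F$, so it passes through every vertex of that component; hence every edge $e_v$ of $Tch(C')$ is a loop, and $Tch(C')$ is a disjoint union of bouquets of loops. For such a graph the cocycle space is $0$ (by cycle-cocycle duality, property (vi), since every $u_D(\{v_\gamma\})=0$), so $Z_D(Tch(C'))$ is all of $\mathbb{R}^{E(Tch(C'))}=\mathbb{R}^{V(F)}$. Thus, by the defining properties in Theorem \ref{main}, an $M_{\mathbb{R}}(C,C')$ matrix is precisely a $V(F)\times V(F)$ integer matrix that reduces to $M(C,C')$ modulo $2$ and whose row space is all of $\mathbb{R}^{V(F)}$; in particular it is nonsingular, which is the first sentence of part 1.

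For the rest of part 1, let $N$ be an $M_{\mathbb{R}}(C,C')$ matrix. Since $N$ is nonsingular over $\mathbb{R}$, the matrix $(\det N)N^{-1}$ is the adjugate $\operatorname{adj}(N)$, which has integer entries and determinant $(\det N)^{n-1}\neq 0$, hence is nonsingular over $\mathbb{R}$. It remains to check that $\operatorname{adj}(N)$ reduces to $M(C',C)$ modulo $2$. The entries of $\operatorname{adj}(N)$ are integer polynomials in the entries of $N$, so reduction modulo $2$ carries $\operatorname{adj}(N)$ to $\operatorname{adj}(M(C,C'))$. By Corollary \ref{kappachange}(3), $M(C,C')$ is invertible over $GF(2)$, so its determinant there is $1$ and $\operatorname{adj}(M(C,C'))=\det(M(C,C'))\cdot M(C,C')^{-1}=M(C,C')^{-1}=M(C',C)$, again by Corollary \ref{kappachange}(3). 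Hence $(\det N)N^{-1}$ is an integer matrix, nonsingular over $\mathbb{R}$, congruent to $M(C',C)$ modulo $2$: an $M_{\mathbb{R}}(C',C)$ matrix. For part 2, let $N$ be an $M_{\mathbb{R}}(C',C)$ matrix and $M$ an $M_{\mathbb{R}}(C,P)$ matrix; then $NM$ is an integer matrix, congruent modulo $2$ to $M(C',C)\cdot M(C,P)=M(C',P)$ by Corollary \ref{kappachange}(2), so property 1 of Theorem \ref{main} holds. Every row of $NM$ is an $\mathbb{R}$-linear combination of rows of $M$, so the row space of $NM$ is contained in that of $M$, which is $Z_D(Tch(P))$; and since $N$ is nonsingular (part 1), left multiplication by $N$ preserves rank, so $\operatorname{rank}(NM)=\operatorname{rank}(M)=\dim Z_D(Tch(P))$, forcing equality of row spaces. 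Thus $NM$ is an $M_{\mathbb{R}}(C',P)$ matrix.

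The only genuine content is the observation that $Tch(C')$ is a bouquet of loops, which is what makes ``$M_{\mathbb{R}}(C,C')$ matrix'' synonymous with ``nonsingular integer matrix congruent to $M(C,C')$ modulo $2$''; everything after that is the routine interplay among adjugates, ranks, and reduction modulo $2$, bootstrapped off Corollary \ref{kappachange}. I do not expect a real obstacle, but the adjugate argument must be phrased so as not to assume $\det N=\pm 1$ (it need not be, as the third example in Section 4 shows), which is exactly why the clean statement involves $(\det M_{\mathbb{R}}(C,C'))\cdot M_{\mathbb{R}}(C,C')^{-1}$ rather than $M_{\mathbb{R}}(C,C')^{-1}$ itself.
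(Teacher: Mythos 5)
Your argument is correct and follows essentially the same route as the paper's: both identify $(\det N)N^{-1}$ with the integer adjugate that reduces mod $2$ to $M(C,C')^{-1}=M(C',C)$ via Corollary \ref{kappachange}, and both use the nonsingularity of $M_{\mathbb{R}}(C',C)$ to see that left multiplication by it preserves the row space in part 2. The one small difference is that you make explicit \emph{why} every $M_{\mathbb{R}}(C,C')$ matrix is nonsingular --- because $Tch(C')$ is a bouquet of loops, so its cycle space is all of $\mathbb{R}^{V(F)}$ --- whereas the paper states nonsingularity as an immediate consequence of Theorem \ref{main} without spelling out this structural observation.
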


\begin{proof}
As $M_{\mathbb{R}}(C,C^{\prime})$ satisfies Theorem \ref{main}, it is a
nonsingular matrix of integers that reduces to $M(C,C^{\prime})$ (mod $2$); it
follows that $\det M_{\mathbb{R}}(C,C^{\prime})$ reduces to $\det
M(C,C^{\prime})$ (mod $2$). The circuit-nullity formula tells us that
$M(C,C^{\prime})$ is a nonsingular $GF(2)$-matrix, so $\det M_{\mathbb{R}%
}(C,C^{\prime})$ is an odd integer. It follows that $\left(  \det
M_{\mathbb{R}}(C,C^{\prime})\right)  \cdot M_{\mathbb{R}}(C,C^{\prime})^{-1}$
is a nonsingular matrix of integers that reduces (mod $2$) to $M(C,C^{\prime
})^{-1}$. Corollary \ref{kappachange} tells us that $M(C,C^{\prime}%
)^{-1}=M(C^{\prime},C)$, so $\left(  \det M_{\mathbb{R}}(C,C^{\prime})\right)
\cdot M_{\mathbb{R}}(C,C^{\prime})^{-1}$ is an $M_{\mathbb{R}}(C^{\prime},C)$ matrix.

For the second property, notice that the nonsingularity of $M_{\mathbb{R}%
}(C^{\prime},C)$ implies that the row space of $M_{\mathbb{R}}(C^{\prime
},C)\cdot M_{\mathbb{R}}(C,P)$ is the same as the row space of $M_{\mathbb{R}%
}(C,P)$. Corollary \ref{kappachange} tells us that $M_{\mathbb{R}}(C^{\prime
},C)\cdot M_{\mathbb{R}}(C,P)$ reduces to $M(C^{\prime},P)$ (mod $2$), so
$M_{\mathbb{R}}(C^{\prime},C)\cdot M_{\mathbb{R}}(C,P)$ is an $M_{\mathbb{R}%
}(C^{\prime},P)$ matrix.
\end{proof}

Multiplying by $\det M_{\mathbb{R}}(C,C^{\prime})$ is necessary in part 1
because as we saw in Section 5, if $\left \vert \det M_{\mathbb{R}}(C,C^{\prime}) \right \vert>1$ then $M_{\mathbb{R}}(C,C^{\prime})^{-1}$ may have entries that are not integers. 

\section{The effect of a transposition}

In addition to $\kappa$-transformations, Kotzig \cite{K} also defined
\textquotedblleft$\varrho$-transforma-tions\textquotedblright\ on Euler
systems. We follow Arratia, Bollob\'{a}s and Sorkin \cite{A2, A} and use a
different name for this operation.

\begin{definition}
\label{pivot}If $C$ is an\ Euler system of a 4-regular graph $F$ and $v,w\in
V(F)$ are interlaced with respect to $C$, then the \emph{transposition}
$C\ast(vw)$ is an Euler system obtained from $C$ by interchanging the
$v$-to-$w$ trails within a circuit of $C$.
\end{definition}

Several properties of transpositions are readily apparent. One property is that if the circuits of $C$ are given with preferred orientations, then the circuits of $C\ast(vw)$ inherit preferred orientations in a natural way. Also, the transpositions $C\ast(vw)$ and $C\ast(wv)$ are the same. Moreover, a transposition can be effected by performing three $\kappa$-transformations: if
$C=vT_{1}wT_{2}vT_{3}wT_{4}$ then
\begin{align*}
((C\ast v)\ast w)\ast v  &  =((v\overleftarrow{T_{2}}w\overleftarrow{T_{1}%
}vT_{3}wT_{4})\ast w)\ast v\\
&  =(v\overleftarrow{T_{2}}w\overleftarrow{T_{3}}vT_{1}wT_{4})\ast
v=vT_{3}wT_{2}vT_{1}wT_{4}=C\ast(vw)\text{,}%
\end{align*}
where $\overleftarrow{T_{i}}$ indicates reversal of the trail $T_{i}$. Another
property is that $C$ and $C\ast(vw)$ respect the same edge directions. In
fact, Kotzig \cite{K}, Pevzner \cite{P} and Ukkonen \cite{U} proved that if
$C$ and $C^{\prime}$ are two Euler systems of $F$, then $C$ and $C^{\prime}$
respect the same edge directions if and only if it is possible to obtain
$C^{\prime}$ from $C$ using transpositions.

It is not hard to see that the effect of a transposition on transition labels
is given by the following.

\begin{proposition}
\label{pivotlabel}If $v$ and $w$ are interlaced with respect to $C$ then
transition labels with respect to $C$ and $C\ast(vw)$ differ only in these
ways: $\chi_{C\ast(vw)}(v)=\phi_{C}(v)$, $\phi_{C\ast(vw)}(v)=\chi_{C}(v),$
$\chi_{C\ast(vw)}(w)=\phi_{C}(w)$ and $\phi_{C\ast(vw)}(w)=\chi_{C}(w)$.
\end{proposition}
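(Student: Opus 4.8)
The plan is to argue directly with half-edges, in the language of Section~3. Because $v$ and $w$ are interlaced on $C$, the circuit of $C$ through them can be written $vT_1wT_2vT_3wT_4$ for trails $T_1,\dots,T_4$, and any other circuit of $C$ contains neither $v$ nor $w$. As recorded in the paragraph preceding the statement, $C\ast(vw)$ is obtained by replacing this circuit with $vT_3wT_2vT_1wT_4$ and leaving the other circuits alone. The one structural fact on which the whole argument rests is that \emph{none of the trails $T_i$ is reversed} in this rearrangement; hence $C$ and $C\ast(vw)$ induce the same direction on every edge of $F$, so that at each vertex $u$ the two half-edges directed into $u$ are the same whether computed from $C$ or from $C\ast(vw)$.

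Next I would record how, given the edge directions, the three transitions at a vertex $u$ are told apart: $\psi_C(u)$ is the unique transition neither of whose two single transitions joins an in-half-edge to an out-half-edge, and among the remaining two, $\phi_C(u)$ is the one whose two single transitions are the two passages of $C$ through $u$, while the other is $\chi_C(u)$. For $u\notin\{v,w\}$ this settles everything at once: each passage of the Euler system through $u$ lies inside a single $T_i$, so the two passages of $C\ast(vw)$ through $u$ are literally those of $C$, and therefore $\phi$, $\psi$, and hence also $\chi$, agree at $u$ for $C$ and $C\ast(vw)$.

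For $u=v$, label the half-edges at $v$ as $h^1$ (final half-edge of $T_4$), $h^2$ (initial half-edge of $T_1$), $h^3$ (final half-edge of $T_2$), $h^4$ (initial half-edge of $T_3$), so that the passages of $C$ through $v$ are $h^1,v,h^2$ and $h^3,v,h^4$, with $h^1$ and $h^3$ the in-half-edges. Reading $vT_3wT_2vT_1wT_4$, the passages of $C\ast(vw)$ through $v$ are $h^1,v,h^4$ and $h^3,v,h^2$; hence $\phi_{C\ast(vw)}(v)=\{\{h^1,h^4\},\{h^2,h^3\}\}$, which with respect to $C$ is the transition pairing in-half-edge $h^1$ with the crossed out-half-edge $h^4$, namely $\chi_C(v)$, while $\psi_{C\ast(vw)}(v)$ still pairs the in-half-edges $h^1,h^3$ and so equals $\psi_C(v)$; the remaining transition then forces $\chi_{C\ast(vw)}(v)=\phi_C(v)$. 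The vertex $w$ is handled in exactly the same way, reading the same circuit $vT_1wT_2vT_3wT_4$ with $w$ in the symmetric positions and using $C\ast(vw)=C\ast(wv)$. I expect the only care needed to be in keeping the four half-edges at $v$ and at $w$ straight through the permutation of the $T_i$ and in phrasing the $\chi/\psi$ distinction via the fixed edge directions rather than an ad hoc orientation of a circuit; there is no deeper obstacle.
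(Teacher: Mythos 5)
The paper leaves Proposition \ref{pivotlabel} unproved, introducing it only with ``It is not hard to see that\dots'', so there is no argument of the author's to compare against. Your proof is correct and is the natural direct verification the author evidently has in mind: the transposition permutes $T_1,\dots,T_4$ without reversing any of them, so the split of the four half-edges at each vertex into in-half-edges and out-half-edges is the same for $C$ and $C\ast(vw)$, which fixes $\psi$ at every vertex and also $\phi$ at every vertex other than $v$ and $w$; then the explicit reading of $vT_3wT_2vT_1wT_4$ at $v$, together with the $v\leftrightarrow w$ symmetry and $C\ast(vw)=C\ast(wv)$, shows the passages at $v$ and $w$ are crossed relative to $C$, swapping $\phi$ and $\chi$ there. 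Your identification of the half-edges $h^1,\dots,h^4$ and the resulting computation of $\phi_{C\ast(vw)}(v)$ check out.
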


Despite the fact that a transposition's effect on transition labels is less
complicated than the effect of a $\kappa$-transformation, Euler systems
related through transpositions may give rise to $M_{\mathbb{R}}^{0}$ matrices
that are related in complicated ways. For example, the following Euler
circuits of $K_{5}$ yield the matrices below.
\begin{align*}
C  &  \text{:}\text{ }a^{-}e^{-}c^{+}b^{+}d^{+}c^{-}a^{+}b^{-}e^{+}d^{-}\\
C\ast(cd)\text{ }\text{: }  &  a^{-}e^{-}c^{+}a^{+}b^{-}e^{+}d^{-}c^{-}%
b^{+}d^{+}\\
C^{\prime}\text{ }\text{: }  &  abcdecadbe
\end{align*}%
\[
M_{\mathbb{R}}^{0}(C,C^{\prime})=%
\begin{pmatrix}
{1} & {1} & {0} & {1} & {1}\\
{1} & {1} & {1} & {-1} & {2}\\
{2} & {1} & {0} & -{1} & {2}\\
{1} & {1} & {1} & {0} & {1}\\
{1} & {2} & {0} & {1} & {1}%
\end{pmatrix}
\]

\[
M_{\mathbb{R}}^{0}(C\ast(cd),C^{\prime})=%
\begin{pmatrix}
{1} & {0} & {0} & {0} & {1}\\
{0} & {1} & {0} & {0} & {1}\\
{1} & {1} & {1} & 0 & {1}\\
{0} & {1} & {0} & {1} & {0}\\
{1} & {1} & {0} & {0} & {1}%
\end{pmatrix}
\]
Notice that $\det M_{\mathbb{R}}^{0}(C,C^{\prime})=-3$ and $\det
M_{\mathbb{R}}^{0}(C\ast(cd),C^{\prime})=-1$, so although $M_{\mathbb{R}}%
^{0}(C,C^{\prime})$ and $M_{\mathbb{R}}^{0}(C\ast(cd),C^{\prime})$ are row
equivalent over $\mathbb{R}$, they are not row equivalent over $\mathbb{Z}$.

\section{The oriented case}

In this section we show that in case $C$ and $P$ respect the same edge
directions, the standard form $M_{\mathbb{R}}^{0}(C,P)$ described in Section
4 has naturality properties over $\mathbb{Z}$ that are very similar to the
naturality properties of $M(C,P)$ over $GF(2)$, stated in Corollary
\ref{kappachange}. Moreover, $M_{\mathbb{R}}^{0}(C,P)$ includes the
skew-symmetric signed interlacement matrices of Brahana \cite{Br}, Bouchet
\cite{Bu}, Jonsson \cite{Jo}, Lauri \cite{Lau} and Macris and Pul\'{e} \cite{MP}.

Suppose $C$ is a directed Euler system of $F$, and the edges of $F$ are directed consistently with the given directions for the circuits of $C$. These edge directions will remain fixed. If $P$ is a circuit partition of $F$, then the circuits of $P$ can be oriented consistently with the given edge directions if and only if $P(v) \neq \psi_{C}(v)$ $\forall v\in V(F)$. Recall the notational scheme of Section 4: for each $v\in
V$, one passage of $C$ through $v$ is $h_{v}^{1},v^{+},h_{v}^{2}$ and
the other is $h_{v}^{3},v^{-},h_{v}^{4}$. 
Let $\mathcal{I}_{\mathbb{R}}(C)$ be the $V(F)\times V(F)$ matrix whose diagonal
entries all equal $0$, and whose $vw$ entry is given by: $\mathcal{I}%
_{\mathbb{R}}(C)_{vw}=1$ if $v$ and $w$ occur on $C$ in the order $v^{+}%
w^{-}v^{-}w^{+}$, $\mathcal{I}_{\mathbb{R}}(C)_{vw}=-1$ if $v$ and $w$ occur
on $C$ in the order $v^{+}w^{+}v^{-}w^{-}$, and $\mathcal{I}_{\mathbb{R}%
}(C)_{vw}=0$ if $v$ and $w$ are not interlaced on $C$. Then
\[
M_{\mathbb{R}}^{0}(C,P)=%
\begin{pmatrix}
I & \mathcal{J}_{\mathbb{R}}(C,P)\\
0 & \mathcal{I}_{\mathbb{R}}(C,P)
\end{pmatrix}
\text{,}%
\]
where $I$ is an identity matrix whose rows and columns correspond to vertices
$v\in V(F)$ with $\phi_{C}(v)=P(v)$, $\mathcal{I}_{\mathbb{R}}(C,P)$ is the
submatrix of $\mathcal{I}_{\mathbb{R}}(C)$ whose rows and columns correspond
to vertices $v\in V(F)$ with $\chi_{C}(v)=P(v)$, and $\mathcal{J}_{\mathbb{R}%
}(C,P)$ is the submatrix of $\mathcal{I}_{\mathbb{R}}(C)$ whose rows (resp.
columns) correspond to vertices $v\in V(F)$ with $\phi_{C}(v)=P(v)$ (resp.
$\chi_{C}(v)=P(v)$).

Two properties of these matrices are apparent.

\begin{itemize}
\item Both $\mathcal{I}_{\mathbb{R}}(C)$ and $\mathcal{I}_{\mathbb{R}}(C,P)$
are skew-symmetric.

\item If we interchange $v^{+}$ and $v^{-}$ on $C$, the effect on both
$\mathcal{I}_{\mathbb{R}}(C)$ and $M_{\mathbb{R}}^{0}(C,P)$ is to multiply the
$v$ row and the $v$ column by $-1$.
\end{itemize}

Some new notation will be useful. Suppose $T$ is a sub-trail of a circuit of
$C$. Let $\phi_{C}(T)\in\mathbb{Z}^{V(F)}$ be the vector whose $x$ coordinate,
for each $x\in V(F)$ with $P(x)=\phi_{C}(x)$, is obtained by tallying passages
of $T$ through $x$, with $x^{+}$ contributing $1$ and $x^{-}$ contributing
$-1$. If $P(x)=\chi_{C}(x)$ then the $x$ coordinate of $\phi_{C}(T)$ is $0$.
Let $\chi_{C}(T)\in\mathbb{Z}^{V(F)}$ be the vector obtained in the same way,
but tallying contributions only with respect to those $x$ with $P(x)=\chi
_{C}(x)$. Also, for each vertex $x\in V(F)$ let $\rho_{x}(M_{\mathbb{R}}%
^{0}(C,P))$ denote the $x$ row of $M_{\mathbb{R}}^{0}(C,P)$. The definition of
$M_{\mathbb{R}}^{0}(C,P)$ may now be rephrased as follows: if a circuit of $C$
is $x^{-}C_{1}(C,x)x^{+}C_{2}(C,x)$ then
\[
\rho_{x}(M_{\mathbb{R}}^{0}(C,P))=\chi_{C}(C_{1}(C,x))+\phi_{C}(x^{+})\text{.}%
\]

Kotzig \cite{K}, Pevzner \cite{P} and Ukkonen \cite{U} proved that if $C$ and
$C^{\prime}$ are two Euler systems of $F$, then $C$ and $C^{\prime}$ respect
the same edge directions if and only if it is possible to obtain $C^{\prime}$
from $C$ using transpositions. Consequently, in order to describe the
relationship between $M_{\mathbb{R}}^{0}(C,P)$ and $M_{\mathbb{R}}%
^{0}(C^{\prime},P)$ it suffices to understand the relationship between
$M_{\mathbb{R}}^{0}(C,P)$ and $M_{\mathbb{R}}^{0}(C\ast(vw),P)$.

\begin{proposition}
\label{transchange} Suppose the edges of $F$ are directed consistently with the circuits of $C$, and $C$ includes a circuit $v^{+}T_{1}w^{+}T_{2}v^{-}T_{3}w^{-}T_{4}$. Consider the signed
version of $C\ast(vw)$ obtained from $C$ by using $v^{+}T_{3}w^{-}T_{2}%
v^{-}T_{1}w^{+}T_{4}$. Let $P$ be a circuit partition such that $\psi
_{C}(x)\neq P(x)$ $\forall x\in V(F)$. Then $M_{\mathbb{R}}^{0}(C,P)$ and
$M_{\mathbb{R}}^{0}(C\ast(vw),P)$ are related through elementary row
operations, as follows:

\begin{enumerate}
\item $\rho_{v}(M_{\mathbb{R}}^{0}(C\ast(vw),P))=\rho_{w}(M_{\mathbb{R}}%
^{0}(C,P))$.

\item $\rho_{w}(M_{\mathbb{R}}^{0}(C\ast(vw),P))=-\rho_{v}(M_{\mathbb{R}}%
^{0}(C,P))$.

\item If $x\in V(F)-\{v,w\}$ then
\begin{gather*}
\rho_{x}(M_{\mathbb{R}}^{0}(C\ast(vw),P))\\
=\rho_{x}(M_{\mathbb{R}}^{0}(C,P))+\mathcal{I}_{\mathbb{R}}(C)_{xw}\rho
_{v}(M_{\mathbb{R}}^{0}(C,P))-\mathcal{I}_{\mathbb{R}}(C)_{xv}\rho
_{w}(M_{\mathbb{R}}^{0}(C,P)).
\end{gather*}

\end{enumerate}
\end{proposition}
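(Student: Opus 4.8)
The plan is to prove the three formulas by directly comparing the rows of the two matrices, using the row-description
$\rho_{x}(M_{\mathbb{R}}^{0}(C,P)) = \chi_{C}(C_{1}(C,x)) + \phi_{C}(x^{+})$
established just before the statement. For parts 1 and 2 the argument is essentially combinatorial: I would observe that in $C$, the fundamental circuit $C_{1}(C,v)$ (the one containing $h_v^1$, i.e.\ the trail read off from $v^{-}$ to $v^{+}$) is $T_3 w^{-} T_4 w^{+}$... wait, I need to be careful about which passage is $+$ and which is $-$; the circuit $v^{+}T_1 w^{+}T_2 v^{-}T_3 w^{-}T_4$ read starting after $v^{-}$ gives $T_3 w^{-}T_4 \,(\text{around})\, v^{+}T_1 w^{+}T_2$, so $C_1(C,v)$ is the sub-trail $T_3 w^{-}T_4\,v^{+}$-reaching portion; I would pin this down precisely. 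In $C\ast(vw) = v^{+}T_3 w^{-}T_2 v^{-}T_1 w^{+}T_4$, the vertex $v$ now has fundamental trail from $v^{-}$ to $v^{+}$ equal to $T_1 w^{+}T_4$, while the vertex $w$'s fundamental trail from (whichever appearance is $w^{-}$) to $w^{+}$ changes correspondingly. The key point is that the multiset of passages traversed, together with their $\pm$ labels, in $C_1(C\ast(vw),v)$ matches that of the trail defining $\rho_w(M_{\mathbb R}^0(C,P))$, and similarly with a global sign flip for $\rho_w(M_{\mathbb R}^0(C\ast(vw),P))$. I would verify this passage-by-passage, checking the diagonal ($x=v$ or $x=w$) entries separately against the case table in Section 3.

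For part 3, which is the substantive part, I would fix $x \notin \{v,w\}$ and split into two cases according to whether $P(x) = \phi_C(x)$ or $P(x) = \chi_C(x)$, and further according to the relative cyclic position of $x$ among the four blocks $T_1,T_2,T_3,T_4$ (equivalently, which of the four sub-trails $T_i$ contains each of the two passages through $x$). The passage from $C$ to $C\ast(vw)$ reverses $T_1$ and $T_3$ and swaps the "slots" they occupy; the effect on the trail $C_1(C\ast(vw),x)$ relative to $C_1(C,x)$ is then to add or delete traversals of the blocks $T_1$ and $T_3$, each such block carrying a net $\pm$ contribution recorded exactly by $\rho_v$ or $\rho_w$ of $M_{\mathbb R}^0(C,P)$. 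The coefficients $\mathcal I_{\mathbb R}(C)_{xw}$ and $-\mathcal I_{\mathbb R}(C)_{xv}$ should emerge as the signed indicators of whether $x$ is interlaced with $w$ (resp.\ $v$) and in which cyclic order — precisely matching the definition of $\mathcal I_{\mathbb R}(C)$ in terms of the patterns $v^{+}w^{-}v^{-}w^{+}$ versus $v^{+}w^{+}v^{-}w^{-}$. When $x$ is interlaced with neither $v$ nor $w$, both passages through $x$ lie in a single block $T_i$, so $C_1(C\ast(vw),x)$ and $C_1(C,x)$ traverse the same blocks and $\rho_x$ is unchanged, consistent with both coefficients vanishing.

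An alternative, cleaner route for part 3 is to use the identity $C\ast(vw) = ((C\ast v)\ast w)\ast v$ recorded in Section 7 together with Proposition~\ref{transchange}'s $\kappa$-analogue — but wait, no such real-coefficient $\kappa$-formula is stated in this excerpt for $M_{\mathbb R}^0$ in the oriented case; Corollary~\ref{kappachange} is over $GF(2)$ and Corollary~\ref{realchange} is only up to row-space. So I would instead prove part 3 directly, and I expect \emph{the bookkeeping of signs across the block reversals} to be the main obstacle: reversing $T_1$ and $T_3$ flips the role of each internal passage's $+$/$-$ label within those blocks, yet the $v$- and $w$-rows of $M_{\mathbb R}^0(C,P)$ are themselves defined via trails that traverse some of these blocks, so one must check that the flips cancel correctly and that the orientation convention for $D$ (initial half-edge $\overline{h_x^1}$) is consistent between $C$ and $C\ast(vw)$ for vertices $x \ne v,w$ — which it is, since the notational scheme $h_x^1,\dots,h_x^4$ at $x$ is inherited unchanged. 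Once the sign accounting is organized into the four cyclic-position cases, each case reduces to a short finite verification against the $M_{\mathbb R}^0$ entry table, and parts 1 and 2 are the easy endpoints of the same analysis.
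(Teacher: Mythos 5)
Your overall plan — computing each row via the rephrased formula $\rho_x(M_{\mathbb{R}}^{0}(C,P)) = \chi_C(C_1(C,x)) + \phi_C(x^{+})$, comparing fundamental trails in $C$ and $C\ast(vw)$, and splitting into cases according to which blocks $T_i$ contain $x^{+}$ and $x^{-}$ — is exactly the paper's approach, and you are right to reject the detour through $\kappa$-transformations. But there is a concrete error that would derail the bookkeeping: you assert that passing from $C$ to $C\ast(vw)$ \emph{reverses} $T_1$ and $T_3$. It does not. A transposition interchanges the two $v$-to-$w$ sub-trails without reversing them: $C = v^{+} T_1 w^{+} T_2 v^{-} T_3 w^{-} T_4$ becomes $C\ast(vw) = v^{+} T_3 w^{-} T_2 v^{-} T_1 w^{+} T_4$, with $T_1$ and $T_3$ both still traversed forward. (The three $\kappa$-transforms realizing $C\ast(vw)$ in Section 6 do each reverse a block, but their composition leaves $T_1$ and $T_3$ un-reversed.) Consequently the ``main obstacle'' you anticipate — internal $+/-$ labels flipping inside $T_1$ and $T_3$ and needing to cancel — does not exist; every internal passage keeps its label. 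If you carried out your case verifications under the reversal assumption, the sign arithmetic would come out wrong whenever $x$ lies inside $T_1$ or $T_3$.

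Two smaller corrections: the extra case split on $P(x) = \phi_C(x)$ versus $\chi_C(x)$ is unnecessary, since $\phi_C(\cdot)$ and $\chi_C(\cdot)$ are complementary projections and the rephrased row formula already handles both types of coordinates uniformly. And part~2 is not simply a passage-by-passage global sign flip of part~1; you need the cancellation $\sum_{i=1}^{4}\chi_C(T_i) = 0$ (which holds because the full circuit visits each vertex once with each sign) to replace $\chi_C(T_1)+\chi_C(T_2)$ by $-\chi_C(T_3)-\chi_C(T_4)$, as the paper does explicitly. With these corrections your plan reproduces the paper's proof.
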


\begin{proof}
Property 1 follows from Proposition \ref{pivotlabel} and the
rephrased definition of $M_{\mathbb{R}}^{0}(C,P)$ given above:
\begin{align*}
\rho_{v}(M_{\mathbb{R}}^{0}(C\ast(vw),P))  &  =\chi_{C\ast(vw)}(T_{1}%
w^{+}T_{4})+\phi_{C\ast(vw)}(v^{+})\\
&  =\chi_{C}(T_{1})+\chi_{C}(T_{4})+\phi_{C}(w^{+})+\chi_{C}(v^{+})\\
&  =\chi_{C}(T_{4}v^{+}T_{1})+\phi_{C}(w^{+})=\rho_{w}(M_{\mathbb{R}}%
^{0}(C,P))\text{.}%
\end{align*}

The proof of Property 2 uses the fact that $\sum_{i=1}^{4}\chi_{C}(T_{i})=0$:%
\begin{align*}
\rho_{w}(M_{\mathbb{R}}^{0}(C\ast(vw),P))  &  =\chi_{C\ast(vw)}(T_{2}%
v^{-}T_{1})+\phi_{C\ast(vw)}(w^{+})\\
&  =\chi_{C}(T_{2})+\chi_{C}(T_{1})+\phi_{C}(v^{-})+\chi_{C}(w^{+})\\
&  =-\chi_{C}(T_{3})-\chi_{C}(T_{4})-\phi_{C}(v^{+})-\chi_{C}(w^{-})\\
&  =-\chi_{C}(T_{3}w^{-}T_{4})-\phi_{C}(v^{+})=-\rho_{v}(M_{\mathbb{R}}%
^{0}(C,P))\text{.}%
\end{align*}

Property 3 has many cases, with $x^{-}$ and $x^{+}$ in various positions. We
detail three cases, and leave the rest to the reader.

If $x$ is not interlaced with either $v$ or $w$, then $C_{1}(C,x)$ and
$C_{1}(C\ast(vw),x)$ may not be the same trail, but they will involve the same
passages through vertices, so $\rho_{x}(M_{\mathbb{R}}^{0}(C,P))=\rho
_{x}(M_{\mathbb{R}}^{0}(C\ast(vw),P))$.

Suppose $x^{-}$ appears in $T_{1}$ and $x^{+}$ appears in $T_{2}$; say
$T_{1}=T_{11}x^{-}T_{12}$ and $T_{2}=T_{21}x^{+}T_{22}$. Then $\mathcal{I}%
_{\mathbb{R}}(C)_{xv}=0$, $\mathcal{I}_{\mathbb{R}}(C)_{xw}=1$ and%
\begin{gather*}
\rho_{x}(M_{\mathbb{R}}^{0}(C\ast(vw),P))\\
=\chi_{C\ast(vw)}(T_{12}w^{+}T_{4}v^{+}T_{3}w^{-}T_{21})+\phi_{C\ast
(vw)}(x^{+})\\
=\chi_{C}(T_{12})+\chi_{C}(T_{4})+\phi_{C}(v^{+})+\chi_{C}(T_{3})+\chi
_{C}(T_{21})+\phi_{C}(x^{+})\\
=\chi_{C}(T_{12})+\chi_{C}(w^{+}T_{21})-\chi_{C}(w^{+})+\phi_{C}(x^{+}%
)+\chi_{C}(T_{4})+\phi_{C}(v^{+})+\chi_{C}(T_{3})\\
=\chi_{C}(T_{12}w^{+}T_{21})+\phi_{C}(x^{+})+\chi_{C}(T_{3}w^{-}T_{4}%
)+\phi_{C}(v^{+})\\
=\rho_{x}(M_{\mathbb{R}}^{0}(C,P))+\rho_{v}(M_{\mathbb{R}}^{0}(C,P))\text{.}%
\end{gather*}

Suppose $x^{-}$ appears in $T_{4}$ and $x^{+}$ appears in $T_{2}$; say
$T_{2}=T_{21}x^{+}T_{22}$ and $T_{4}=T_{41}x^{-}T_{42}$. Then $\mathcal{I}%
_{\mathbb{R}}(C)_{xv}=1=\mathcal{I}_{\mathbb{R}}(C)_{xw}$ and%
\begin{gather*}
\rho_{x}(M_{\mathbb{R}}^{0}(C\ast(vw),P))=\chi_{C\ast(vw)}(T_{42}v^{+}%
T_{3}w^{-}T_{21})+\phi_{C\ast(vw)}(x^{+})\\
=\chi_{C}(T_{42})+\phi_{C}(v^{+})+\chi_{C}(T_{3})+\phi_{C}(w^{-})+\chi
_{C}(T_{21})+\phi_{C}(x^{+})\\
=\chi_{C}(T_{42}v^{+}T_{1}w^{+}T_{21})-\chi_{C}(v^{+}T_{1}w^{+})+\phi
_{C}(x^{+})-\phi_{C}(w^{+})\\
+\phi_{C}(v^{+})+\chi_{C}(T_{3})\\
=\phi_{C}(x^{+})+\chi_{C}(T_{42}v^{+}T_{1}w^{+}T_{21})+\chi_{C}(w^{-}%
)-\chi_{C}(T_{4}v^{+}T_{1})+\chi_{C}(T_{4})\\
-\phi_{C}(w^{+})+\phi_{C}(v^{+})+\chi_{C}(T_{3})\\
=\rho_{x}(M_{\mathbb{R}}^{0}(C,P))-\chi_{C}(T_{4}v^{+}T_{1})-\phi_{C}%
(w^{+})+\chi_{C}(T_{3}w^{-}T_{4})+\phi_{C}(v^{+})\\
=\rho_{x}(M_{\mathbb{R}}^{0}(C,P))-\rho_{w}(M_{\mathbb{R}}^{0}(C,P))+\rho
_{v}(M_{\mathbb{R}}^{0}(C,P))\text{.}%
\end{gather*}

\end{proof}

Proposition \ref{transchange} uses the same set of elementary row operations to
obtain $M_{\mathbb{R}}^{0}(C\ast(vw),P)$ from $M_{\mathbb{R}}^{0}(C,P)$, for
every circuit partition $P$ with $\psi_{C}(x)\neq P(x)$ $\forall x\in V(F)$.
This lack of dependence on $P$ leads to strong naturality properties, just as
it does in the proof of Corollary \ref{kappachange}. We believe these
properties have not appeared in the literature, except for the special case of
$M_{\mathbb{R}}^{0}(C^{\prime},C)=M_{\mathbb{R}}^{0}(C,C^{\prime})^{-1}$
involving Euler circuits with $\phi_{C}(v)\neq\phi_{C^{\prime}}(v)$ $\forall
v\in V(F)$, which is due to Bouchet \cite{Bu}.

\begin{corollary}
\label{orchange}Suppose $C$ and $C^{\prime}$ are Euler systems of $F$, whose circuits are oriented consistently with the same edge directions. Then for each signed version of $C$ there is a corresponding signed version of $C^{\prime}$ such that $M_{\mathbb{R}}^{0}(C^{\prime},C)=M_{\mathbb{R}}^{0}(C,C^{\prime})^{-1}$. Moreover if $P$ is
a circuit partition that respects the same edge directions, then these signed
versions of $C$ and $C^{\prime}$ have $M_{\mathbb{R}}^{0}(C^{\prime
},P)=M_{\mathbb{R}}^{0}(C^{\prime},C)\cdot M_{\mathbb{R}}^{0}(C,P)$.
\end{corollary}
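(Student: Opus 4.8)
The plan is to imitate the proof of Corollary \ref{kappachange}, replacing Kotzig's theorem by the Kotzig--Pevzner--Ukkonen theorem (so that $C'$ is reached from $C$ by transpositions rather than $\kappa$-transformations) and replacing Proposition \ref{kappalabel} by Proposition \ref{transchange}. The essential point is that in Proposition \ref{transchange} the coefficients occurring in the row operations ($1$, $-1$, $\mathcal{I}_{\mathbb{R}}(C)_{xw}$ and $-\mathcal{I}_{\mathbb{R}}(C)_{xv}$) depend only on $C$ and on the interlaced pair $v,w$, and not on the circuit partition $P$. Hence the passage from $M_{\mathbb{R}}^{0}(C,P)$ to $M_{\mathbb{R}}^{0}(C\ast(vw),P)$ is left multiplication by a single invertible integer matrix, and the very same matrix does the job simultaneously for \emph{every} circuit partition $P$ that respects the same edge directions as $C$.

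First I would fix a signed version of $C$. Since $C$ and $C'$ respect the same edge directions, the Kotzig--Pevzner--Ukkonen theorem lets me write $C'=C\ast(v_{1}w_{1})\ast\cdots\ast(v_{k}w_{k})$; put $C^{(0)}=C$ and $C^{(i)}=C^{(i-1)}\ast(v_{i}w_{i})$. Applying Proposition \ref{transchange} at each step, I obtain signed versions of each $C^{(i)}$ together with invertible integer matrices $E_{i}$, independent of $P$, such that $M_{\mathbb{R}}^{0}(C^{(i)},P)=E_{i}\cdot M_{\mathbb{R}}^{0}(C^{(i-1)},P)$ for every circuit partition $P$ respecting the common edge directions. (When the interlaced pair $v_{i},w_{i}$ is not displayed on the current signed word in the precise form assumed in Proposition \ref{transchange} — equivalently, when $\mathcal{I}_{\mathbb{R}}(C^{(i-1)})_{v_{i}w_{i}}=1$ — I would first interchange $v_{i}^{+}$ and $v_{i}^{-}$ on $C^{(i-1)}$, which recasts the pair in the required form at the cost of conjugating $M_{\mathbb{R}}^{0}$ by a diagonal $\pm1$ matrix, and then undo that interchange on $C^{(i)}$; since a diagonal $\pm1$ matrix is its own inverse, the net transformation is again left multiplication by a $P$-independent invertible integer matrix. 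Alternatively one records the obvious variants of Proposition \ref{transchange} for the other signings.) The signed version of $C^{(k)}=C'$ produced this way is the ``corresponding signed version'' of the statement, and I set $E=E_{k}\cdots E_{1}$, so that $M_{\mathbb{R}}^{0}(C',P)=E\cdot M_{\mathbb{R}}^{0}(C,P)$ for all such $P$.

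Now I specialize $P$. Transpositions preserve edge directions, so each $C^{(i)}$ respects the edge directions of $C$; consequently the circuit partitions $\{C\}$ and $\{C'\}$ (whose circuits are those of $C$, resp.\ $C'$) and the given $P$ all respect the common edge directions, and Proposition \ref{transchange} applies at every step for each of them. For $P=\{C\}$ every transition of $P$ is a $\phi_{C}$ transition, so the entry formulas of Section 3 give $M_{\mathbb{R}}^{0}(C,\{C\})=I$; hence $E=E\cdot I=M_{\mathbb{R}}^{0}(C',\{C\})=M_{\mathbb{R}}^{0}(C',C)$. For $P=\{C'\}$ one has likewise $M_{\mathbb{R}}^{0}(C',\{C'\})=I$, so $I=E\cdot M_{\mathbb{R}}^{0}(C,\{C'\})=M_{\mathbb{R}}^{0}(C',C)\cdot M_{\mathbb{R}}^{0}(C,C')$, giving $M_{\mathbb{R}}^{0}(C',C)=M_{\mathbb{R}}^{0}(C,C')^{-1}$. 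Finally, for the given $P$ we get $M_{\mathbb{R}}^{0}(C',P)=E\cdot M_{\mathbb{R}}^{0}(C,P)=M_{\mathbb{R}}^{0}(C',C)\cdot M_{\mathbb{R}}^{0}(C,P)$, which is the ``moreover'' clause.

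I expect the main obstacle to be the sign bookkeeping in the second paragraph: ensuring that at each transposition the interlaced pair can be put in the form needed for Proposition \ref{transchange}, and that the sign interchanges used to do so do not spoil the fact that the cumulative transformation $M_{\mathbb{R}}^{0}(C,P)\mapsto M_{\mathbb{R}}^{0}(C',P)$ is left multiplication by one $P$-independent integer matrix. Once that is pinned down, the remainder is a routine specialization argument exactly parallel to the $GF(2)$ case treated in Corollary \ref{kappachange}.
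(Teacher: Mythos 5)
Your argument is correct and is essentially the one in the paper: both invoke the Kotzig--Pevzner--Ukkonen theorem to reduce to a chain of transpositions, both observe that the row operations of Proposition \ref{transchange} are independent of $P$ (so the cumulative operation is left multiplication by a single matrix $E$), and both then read off the two claims by specializing $P$ to $C$ and to $C'$; the paper phrases the cumulative transformation via the double matrix $[\,I \mid M_{\mathbb{R}}^{0}(C,P)\,]$, which is the same thing as your $E=E_k\cdots E_1$. You also spell out the sign bookkeeping that the paper compresses into the phrase ``using the sign convention of Proposition \ref{transchange},'' and your fix by conjugating with a diagonal $\pm1$ matrix is sound (though it is worth noting that the same effect can be achieved more simply by interchanging the roles of $v_i$ and $w_i$, since $C\ast(vw)=C\ast(wv)$ and the order $v^{+}w^{-}v^{-}w^{+}$ becomes $w^{+}v^{+}w^{-}v^{-}$ after a cyclic shift).
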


\begin{proof}
According to the theorem of Kotzig \cite{K}, Pevzner \cite{P} and Ukkonen
\cite{U} mentioned above, there is a sequence of transpositions that
transforms a signed version of $C$ into a signed version of $C^{\prime}$ using
the sign convention of Proposition \ref{transchange}. Proposition
\ref{transchange} also gives us an induced sequence of elementary row
operations, which transforms the double matrix%
\[%
\begin{pmatrix}
I=M_{\mathbb{R}}^{0}(C,C) & M_{\mathbb{R}}^{0}(C,P)
\end{pmatrix}
\]
into the double matrix%
\[%
\begin{pmatrix}
M_{\mathbb{R}}^{0}(C^{\prime},C) & M_{\mathbb{R}}^{0}(C^{\prime},P)
\end{pmatrix}
\text{.}%
\]
It follows that if $E$ is the product of elementary matrices corresponding to
the induced elementary row operations, then $E\cdot I=M_{\mathbb{R}}%
^{0}(C^{\prime},C)$ and $E\cdot M_{\mathbb{R}}^{0}(C,P)=M_{\mathbb{R}}%
^{0}(C^{\prime},P)$. In particular, if $P=C^{\prime}$ we deduce that $E\cdot
I=M_{\mathbb{R}}^{0}(C^{\prime},C)$ and $E\cdot M_{\mathbb{R}}^{0}%
(C,C^{\prime})=M_{\mathbb{R}}^{0}(C^{\prime},C^{\prime})=I$.
\end{proof}

\begin{corollary}
\label{orchange2}Let $C$ and $C^{\prime}$ be Euler systems of $F$, whose circuits are oriented consistently with the same edge directions. Consider arbitrary signed versions of $C$ and $C^{\prime}$. Then there is a matrix $\Delta$ with the following properties.

\begin{enumerate}
\item Every diagonal entry of $\Delta$ is $\pm1$, and every other entry of
$\Delta$ is $0$.

\item $M_{\mathbb{R}}^{0}(C^{\prime},C)=\Delta\cdot M_{\mathbb{R}}%
^{0}(C,C^{\prime})^{-1}\cdot\Delta$.

\item If $P$ is any circuit partition that respects the same edge directions, then $M_{\mathbb{R}}^{0}(C^{\prime},P)=M_{\mathbb{R}%
}^{0}(C^{\prime},C)\cdot\Delta\cdot M_{\mathbb{R}}^{0}(C,P)\cdot\Delta$.
\end{enumerate}
\end{corollary}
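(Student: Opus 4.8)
The plan is to bootstrap from Corollary \ref{orchange}, which already establishes the identities for the special ``corresponding'' signed versions, and to account for the passage to arbitrary signed versions using the sign-change rule recorded in Section 7: interchanging $v^{+}$ and $v^{-}$ on an Euler system multiplies the $v$ row and the $v$ column of the associated $M_{\mathbb{R}}^{0}$ matrix by $-1$. Iterating this over a set $S$ of vertices, and using that the diagonal $\pm1$ matrices involved commute and square to the identity, shows that replacing one signed version of an Euler system $C'$ by another changes $M_{\mathbb{R}}^{0}(C',Q)$ into $\Delta\, M_{\mathbb{R}}^{0}(C',Q)\,\Delta$ for any fixed circuit partition $Q$ respecting the same edge directions, where $\Delta$ is the diagonal matrix with entry $-1$ exactly at the vertices where the two signed versions disagree and $+1$ elsewhere. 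The linchpin observation is that $M_{\mathbb{R}}^{0}(C,C')$ does not depend on the chosen signed version of $C'$ at all: in the construction of Section 3, $C'$ enters only as a circuit partition of $F$, so only its circuits (not the $\pm$ labels on its passages) play a role; likewise $M_{\mathbb{R}}^{0}(C',C)$ is independent of the signed version of $C$. These two facts are the only preliminaries; the first is a one-line induction on $|S|$ from the Section 7 statement, and the second is immediate from the definition.

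With these in hand, the construction is as follows. Given arbitrary signed versions of $C$ and of $C'$, apply Corollary \ref{orchange} to the given signed version of $C$ to obtain a ``good'' signed version $C'_{0}$ of $C'$, so that $M_{\mathbb{R}}^{0}(C'_{0},C)=M_{\mathbb{R}}^{0}(C,C'_{0})^{-1}$ and, for every circuit partition $P$ respecting the same edge directions, $M_{\mathbb{R}}^{0}(C'_{0},P)=M_{\mathbb{R}}^{0}(C'_{0},C)\cdot M_{\mathbb{R}}^{0}(C,P)$. By the linchpin observation $M_{\mathbb{R}}^{0}(C,C'_{0})=M_{\mathbb{R}}^{0}(C,C')$. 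Let $\Delta$ be the diagonal $\pm1$ matrix recording the vertices at which the given signed version of $C'$ differs from $C'_{0}$; property 1 holds by the very definition of $\Delta$, and the iterated sign-change rule gives $M_{\mathbb{R}}^{0}(C',Q)=\Delta\, M_{\mathbb{R}}^{0}(C'_{0},Q)\,\Delta$ for any such $Q$, in particular for $Q=C$ and for $Q=P$.

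Property 2 is then immediate: $M_{\mathbb{R}}^{0}(C',C)=\Delta\, M_{\mathbb{R}}^{0}(C'_{0},C)\,\Delta=\Delta\, M_{\mathbb{R}}^{0}(C,C')^{-1}\,\Delta$. For property 3 I expand
\[
M_{\mathbb{R}}^{0}(C',P)=\Delta\, M_{\mathbb{R}}^{0}(C'_{0},P)\,\Delta=\Delta\, M_{\mathbb{R}}^{0}(C'_{0},C)\cdot M_{\mathbb{R}}^{0}(C,P)\,\Delta,
\]
and then use $\Delta^{2}=I$ together with property 2 to rewrite $\Delta\, M_{\mathbb{R}}^{0}(C'_{0},C)=M_{\mathbb{R}}^{0}(C',C)\,\Delta$, which yields $M_{\mathbb{R}}^{0}(C',P)=M_{\mathbb{R}}^{0}(C',C)\cdot\Delta\, M_{\mathbb{R}}^{0}(C,P)\,\Delta$, as required. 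I do not expect any of the algebra to present a genuine obstacle; the one point that must be handled with care is keeping straight that the Section 7 sign-change rule acts on the \emph{Euler-system} slot (the first argument), which is why only the signed version of $C'$ — and not of $C$ — contributes the conjugating matrix $\Delta$, and why the circuit-partition slot can be filled indifferently by $C'_{0}$ or by the given signed $C'$.
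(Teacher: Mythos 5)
Your proof is correct and follows essentially the same route as the paper: pass from the given signed version of $C'$ to the ``corresponding'' signed version provided by Corollary \ref{orchange}, encode the discrepancy in the diagonal $\pm 1$ matrix $\Delta$ via the Section~7 sign-change rule, and use the fact that the signed labels of the second argument of $M_{\mathbb{R}}^{0}$ are irrelevant to swap $C'_0$ for $C'$ inside $M_{\mathbb{R}}^{0}(C,\cdot)^{-1}$. The only difference is cosmetic: you spell out the ``linchpin observation'' that the paper invokes silently when it writes $M_{\mathbb{R}}^{0}(C,C'')^{-1}=M_{\mathbb{R}}^{0}(C,C')^{-1}$.
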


\begin{proof}
Let $C^{\prime\prime}$ denote the signed version of $C^{\prime}$ that
corresponds to the given signed version of $C$, as in Corollary \ref{orchange}. For any circuit partition $P$ with $P(x)\neq\psi_{C}(x)$ $\forall x\in
V(F)$, $M_{\mathbb{R}}^{0}(C^{\prime},P)$ is the matrix obtained from
$M_{\mathbb{R}}^{0}(C^{\prime\prime},P)$ by multiplying by $-1$ the row and
column of $M_{\mathbb{R}}^{0}(C^{\prime\prime},P)$ corresponding to each $x\in
V(F)$ such that the positions of $x^{-}$ and $x^{+}$ in $C^{\prime}$ and
$C^{\prime\prime}$\ are different. Consequently if $\Delta$ is the diagonal
matrix whose $xx$ entry is $1$ (resp. $-1$) when the positions of $x^{-}$ and
$x^{+}$ in $C^{\prime}$ and $C^{\prime\prime}$ are the same (resp. different),
then $M_{\mathbb{R}}^{0}(C^{\prime},P)=\Delta\cdot M_{\mathbb{R}}%
^{0}(C^{\prime\prime},P)\cdot\Delta$. Assertions (c) and (d) now follow from
Corollary \ref{orchange}:
\[
M_{\mathbb{R}}^{0}(C^{\prime},C)=\Delta\cdot M_{\mathbb{R}}^{0}(C^{\prime
\prime},C)\cdot\Delta=\Delta\cdot M_{\mathbb{R}}^{0}(C,C^{\prime\prime}%
)^{-1}\cdot\Delta=\Delta\cdot M_{\mathbb{R}}^{0}(C,C^{\prime})^{-1}\cdot\Delta
\]
\begin{align*}
M_{\mathbb{R}}^{0}(C^{\prime},P)  &  =\Delta\cdot M_{\mathbb{R}}^{0}%
(C^{\prime\prime},P)\cdot\Delta=\Delta\cdot M_{\mathbb{R}}^{0}(C^{\prime
\prime},C)\cdot M_{\mathbb{R}}^{0}(C,P)\cdot\Delta\\
&  =M_{\mathbb{R}}^{0}(C^{\prime},C)\cdot\Delta\cdot M_{\mathbb{R}}%
^{0}(C,P)\cdot\Delta
\end{align*}
\end{proof}

Lauri \cite{Lau} and Macris and Pul\'{e} \cite{MP} gave a formula for the number
of Euler systems of $F$ that respect the same edge directions. We close with a
quick explanation of this important result.

\begin{lemma}
\label{detone}Suppose $C$ and $C^{\prime}$ are Euler systems of $F$, whose circuits are oriented consistently with the same edge directions. Then for any signed versions of $C$ and
$C^{\prime}$,%
\[
\det M_{\mathbb{R}}^{0}(C,C^{\prime})=1\text{.}%
\]

\end{lemma}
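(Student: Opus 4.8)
The plan is to reduce to the case $P = C'$ of the naturality machinery already established, and then to use a determinant-tracking argument along a sequence of transpositions. First I would observe that, since $C$ and $C'$ respect the same edge directions, the theorem of Kotzig, Pevzner and Ukkonen gives a sequence of transpositions carrying a signed version of $C$ to a signed version of $C'$; by Corollary \ref{orchange2} it suffices to prove the determinant claim for one fixed pair of signed versions, because changing a signed version conjugates $M_{\mathbb{R}}^{0}(C,C')$ by a diagonal $\pm1$ matrix $\Delta$ and pre-multiplies by a matrix of the form $M_{\mathbb{R}}^{0}(C',C)$, and in any case $\det\Delta^2 = 1$ so the sign of the determinant is unchanged — in fact I would check that the determinant value itself is unchanged under the three row/column sign-flips recorded at the end of Section 3. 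So fix the signed version of $C$ and let $C'$ be the signed version produced by Corollary \ref{orchange}.

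Next I would exploit Corollary \ref{orchange} directly: for those signed versions, $M_{\mathbb{R}}^{0}(C',C) = M_{\mathbb{R}}^{0}(C,C')^{-1}$, and moreover both matrices are obtained from the identity by a sequence of elementary row operations coming from Proposition \ref{transchange}. Each such elementary operation is of one of three types: a swap of two rows composed with negating one of them (Properties 1 and 2 of Proposition \ref{transchange}, applied to the pair $v,w$), or an addition of an integer multiple of one row to another (Property 3). The key point is that the row-addition operations have determinant $1$, while the swap-with-sign operation, carried out on the pair $\{v,w\}$, contributes a factor of $(-1)$ from the transposition of the two rows times $(-1)$ from negating one of them, i.e.\ $+1$ overall. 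Hence \emph{every} elementary matrix occurring in the product $E$ with $E\cdot I = M_{\mathbb{R}}^{0}(C',C)$ has determinant $+1$, so $\det M_{\mathbb{R}}^{0}(C',C) = 1$, and therefore $\det M_{\mathbb{R}}^{0}(C,C') = 1$ as well. Finally, for an arbitrary signed version of $C'$ (not necessarily the one matched to $C$), Corollary \ref{orchange2} gives $M_{\mathbb{R}}^{0}(C,C') = \Delta\cdot M_{\mathbb{R}}^{0}(C,C'')\cdot\Delta$ for a diagonal $\pm1$ matrix $\Delta$, so $\det M_{\mathbb{R}}^{0}(C,C') = (\det\Delta)^2\det M_{\mathbb{R}}^{0}(C,C'') = \det M_{\mathbb{R}}^{0}(C,C'') = 1$, and symmetrically for the $C$ side.

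The step I expect to be the main obstacle is verifying that the ``swap-with-one-sign-flip'' elementary operation really does have determinant $+1$ and, more importantly, that the \emph{combined} effect of Properties 1, 2, and 3 of Proposition \ref{transchange} — applied as a single package for one transposition $C \mapsto C\ast(vw)$ — is a single linear map whose matrix has determinant $1$. One must be careful that the row operations of Property 3 reference the \emph{old} rows $\rho_v, \rho_w$ of $M_{\mathbb{R}}^{0}(C,P)$, i.e.\ they must be performed before the swap-and-negate of Properties 1–2; written in the correct order they are genuinely elementary (each adds a multiple of a row not yet modified), and the whole package factors as (product of transvections, determinant $1$) followed by (the $2\times 2$ block $\bigl[\begin{smallmatrix}0&1\\-1&0\end{smallmatrix}\bigr]$ on rows $v,w$, determinant $1$). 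A secondary point worth a sentence is the base case: $M_{\mathbb{R}}^{0}(C,C)=I$ has determinant $1$, which starts the induction. Once the per-transposition factor is pinned down as $+1$, the lemma follows immediately by multiplicativity of the determinant along the Kotzig–Pevzner–Ukkonen sequence.
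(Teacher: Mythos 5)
Your proposal is correct and follows essentially the same route as the paper: reduce to a single transposition via Kotzig--Pevzner--Ukkonen, observe that Property~3 of Proposition~\ref{transchange} contributes only determinant-one row additions while Properties~1 and~2 together (swap plus one sign-flip, the block $\bigl[\begin{smallmatrix}0&1\\-1&0\end{smallmatrix}\bigr]$) contribute determinant $+1$, and dispose of the choice of signed versions by conjugation with a diagonal $\pm1$ matrix $\Delta$, which leaves the determinant fixed. The only cosmetic difference is that you track $E$ acting on $I$ to produce $M_{\mathbb{R}}^{0}(C',C)$ and then invert, whereas the paper tracks $E$ carrying $M_{\mathbb{R}}^{0}(C,C')$ to $M_{\mathbb{R}}^{0}(C',C')=I$ directly; your side-remark about performing the Property~3 row additions before the swap-and-negate is a sound and worthwhile ordering check.
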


\begin{proof}
Suppose first that $C^{\prime}=C\ast(vw)$ and the signed versions of $C$ and
$C^{\prime}$ are related as in Proposition \ref{transchange}. Then Proposition
\ref{transchange} tells us how to obtain $M_{\mathbb{R}}^{0}(C^{\prime
},C^{\prime})=I$ from $M_{\mathbb{R}}^{0}(C,C^{\prime})$. The determinant is
not affected by the row operations of part 3 of Proposition \ref{transchange},
and the row operations of parts 1 and 2 -- interchanging the $v$ and $w$ rows,
and multiplying one of these rows by $-1$ -- both have the effect of
multiplying the determinant by $-1$. We conclude that in this case $\det
M_{\mathbb{R}}^{0}(C,C^{\prime})=\det I=1$.

If some other signed versions of $C$ and $C^{\prime}$ are used, then the
effect is to replace $M_{\mathbb{R}}^{0}(C,C^{\prime})$ with $\Delta\cdot
M_{\mathbb{R}}^{0}(C,C^{\prime})\cdot\Delta$, as in the proof of Corollary
\ref{orchange2}. As $\det\Delta=\pm1$, this replacement does not change the determinant.

The general case follows from part 3 of Corollary \ref{orchange2} by
induction, because $C^{\prime}$ can be obtained from $C$ using a sequence
of transpositions.
\end{proof}

\begin{corollary}
\label{detzero}Let $C$ be an Euler system of $F$, and $P$ a circuit partition
with $\psi_{C}(v)\neq P(v)$ $\forall v\in V(F)$. Then the following conditions
are equivalent.

\begin{enumerate}
\item $P$ is an Euler system.

\item $\det M_{\mathbb{R}}^{0}(C,P)=1$.

\item $\det M_{\mathbb{R}}^{0}(C,P)\not =0$.

\item $\det\mathcal{I}_{\mathbb{R}}(C,P)=1$.

\item $\det\mathcal{I}_{\mathbb{R}}(C,P)\not =0$.
\end{enumerate}
\end{corollary}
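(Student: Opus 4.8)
The plan is to prove the equivalences in a cycle, using the block structure
\[
M_{\mathbb{R}}^{0}(C,P)=
\begin{bmatrix}
I & \mathcal{J}_{\mathbb{R}}(C,P)\\
0 & \mathcal{I}_{\mathbb{R}}(C,P)
\end{bmatrix}
\]
established in this section. From this block form one immediately reads off $\det M_{\mathbb{R}}^{0}(C,P)=\det\mathcal{I}_{\mathbb{R}}(C,P)$, so conditions 2 and 4 are equivalent and conditions 3 and 5 are equivalent; the implications $2\Rightarrow3$ and $4\Rightarrow5$ are trivial. Thus the real content is to show $1\Rightarrow2$ and $3\Rightarrow1$ (equivalently $5\Rightarrow1$), which I would organize as $1\Rightarrow2$, $2\Rightarrow3$, $3\Rightarrow1$.

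For $1\Rightarrow2$: if $P$ is an Euler system then, since $P$ and $C$ respect the same edge directions (this uses the hypothesis $\psi_C(v)\neq P(v)$ for all $v$, which says exactly that $C$ and $P$ are orientation-consistent), Lemma \ref{detone} applies directly with $C'=P$ and gives $\det M_{\mathbb{R}}^{0}(C,P)=1$.

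For $3\Rightarrow1$: suppose $P$ is not an Euler system, i.e.\ $|P|>c(F)$. Then by the remarks following Theorem \ref{main} (the $\mathbb{R}$-version of the circuit-nullity formula), the $\mathbb{R}$-nullity of any $M_{\mathbb{R}}(C,P)$ matrix — in particular of $M_{\mathbb{R}}^{0}(C,P)$ — equals $|P|-c(F)>0$, so $\det M_{\mathbb{R}}^{0}(C,P)=0$, contradicting condition 3. (Alternatively, invoke Theorem \ref{main} part 2: the rows of $M_{\mathbb{R}}^{0}(C,P)$ span the cycle space of $Tch(P)$, which has dimension $|E(Tch(P))|-|V(Tch(P))|+c(Tch(P))=|V(F)|-|P|+c(F)<|V(F)|$ when $P$ is not an Euler system, so the $V(F)\times V(F)$ matrix is singular.) This closes the cycle: $1\Rightarrow2\Rightarrow3\Rightarrow1$, and similarly $1\Rightarrow4\Rightarrow5\Rightarrow1$ via the determinant identity $\det M_{\mathbb{R}}^{0}(C,P)=\det\mathcal{I}_{\mathbb{R}}(C,P)$.

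I expect no serious obstacle here; the proof is essentially bookkeeping once Lemma \ref{detone} and the $\mathbb{R}$-circuit-nullity formula are in hand. The only point requiring a moment's care is the verification that ``$P$ is an Euler system'' together with ``$\psi_C(v)\neq P(v)$ for all $v$'' does legitimately put us in the situation of Lemma \ref{detone}: an Euler system $P$ that is orientation-consistent with $C$ respects the same edge directions as $C$, so Lemma \ref{detone} is applicable with $C'=P$. I would state this one-line observation explicitly and then assemble the implications as above.
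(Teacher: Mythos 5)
Your proof is correct and follows essentially the same route as the paper's: use the block triangular form to get $2\Leftrightarrow4$ and $3\Leftrightarrow5$, use Lemma \ref{detone} for $1\Rightarrow2$ (with the correct observation that $\psi_C(v)\neq P(v)$ for all $v$ means $P$ respects the same edge directions as $C$), and use Theorem \ref{main} to close the loop at $3\Rightarrow1$. The only cosmetic difference is at that last step: the paper phrases it as ``condition 3 forces every edge of $Tch(P)$ to be a loop, hence $P$ is an Euler system,'' whereas you count dimensions and use the $\mathbb{R}$-circuit-nullity formula to conclude $|P|-c(F)>0$ forces singularity; these are two readings of the same consequence of Theorem \ref{main}.
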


\begin{proof}
Lemma \ref{detone} gives us the implication $1\Rightarrow2$. The equality%
\[
M_{\mathbb{R}}^{0}(C,P)=%
\begin{pmatrix}
I & \mathcal{J}_{\mathbb{R}}(C,P)\\
0 & \mathcal{I}_{\mathbb{R}}(C,P)
\end{pmatrix}
\]
tells us that $\det M_{\mathbb{R}}^{0}(C,P)=\det\mathcal{I}_{\mathbb{R}}(C,P)$, so we
have $2\Leftrightarrow4$ and $3\Leftrightarrow5$. The implication
$2\Rightarrow3$ is obvious. \ According to Theorem \ref{main}, condition 3
implies that every edge of $Tch(P)$ is a loop; this in turn implies that $P$
is an Euler system.
\end{proof}

\begin{theorem}
\label{detcount}(Lauri \cite{Lau} and Macris and Pul\'{e} \cite{MP}) Let $C$ be any signed version of any Euler system of $F$. Then the number of Euler systems of $F$ that respect the edge directions given by $C$ is $\det(I+\mathcal{I}_{\mathbb{R}}(C))$.
\end{theorem}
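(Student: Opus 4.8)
The plan is to write $\det(I+\mathcal{I}_{\mathbb{R}}(C))$ as a sum of principal minors of $\mathcal{I}_{\mathbb{R}}(C)$ and to recognize the nonzero minors in that sum as counting, one apiece, the Euler systems we want.

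First I would parametrize the relevant circuit partitions. Any circuit partition $P$ of $F$ with $P(v)\neq\psi_{C}(v)$ for every $v\in V(F)$ is determined by the set $S=\{v\in V(F):P(v)=\chi_{C}(v)\}$, and conversely each $S\subseteq V(F)$ determines such a partition; call it $P_{S}$, so that $P_{\varnothing}=C$. Distinct sets give distinct partitions. I claim the partitions $P_{S}$ that are Euler systems are exactly the Euler systems of $F$ that respect the edge directions defined by $C$. In one direction, a circuit partition using only $\phi_{C}$- and $\chi_{C}$-transitions is consistent with the orientation of $C$, so when it is an Euler system it respects the edge directions of $C$. In the other, if $C'$ is an Euler system respecting those edge directions, then by the theorem of Kotzig, Pevzner, and Ukkonen it is obtained from $C$ by a sequence of transpositions, and Proposition \ref{pivotlabel} shows that no transposition introduces a $\psi$-transition, so $C'$ uses only $\phi_{C}$- and $\chi_{C}$-transitions; that is, $C'=P_{S}$ with $S=\{v:C'(v)=\chi_{C}(v)\}$. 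Hence $S\mapsto P_{S}$ restricts to a bijection from $\{S\subseteq V(F):P_{S}\text{ is an Euler system}\}$ onto the set whose cardinality is sought.

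Next I would compute $\det M_{\mathbb{R}}^{0}(C,P_{S})$. By the description of $\mathcal{I}_{\mathbb{R}}(C,P)$ as a submatrix of $\mathcal{I}_{\mathbb{R}}(C)$ given earlier in this section, $\mathcal{I}_{\mathbb{R}}(C,P_{S})$ is exactly the principal submatrix $\mathcal{I}_{\mathbb{R}}(C)[S]$ supported on the rows and columns indexed by $S$; combining this with the upper block-triangular form of $M_{\mathbb{R}}^{0}(C,P_{S})$ gives $\det M_{\mathbb{R}}^{0}(C,P_{S})=\det\mathcal{I}_{\mathbb{R}}(C)[S]$ (with the empty minor read as $1$). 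Now Corollary \ref{detzero} is precisely what makes the argument work: it says this determinant equals $1$ when $P_{S}$ is an Euler system and $0$ otherwise, so in particular every principal minor of $\mathcal{I}_{\mathbb{R}}(C)$ lies in $\{0,1\}$. Together with the bijection above, the number of Euler systems of $F$ respecting the edge directions of $C$ is therefore $\sum_{S\subseteq V(F)}\det\mathcal{I}_{\mathbb{R}}(C)[S]$.

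Finally I would invoke the standard identity $\det(I+A)=\sum_{S}\det A[S]$, valid for any square matrix $A$ over a commutative ring, the sum taken over all subsets $S$ of the index set with $\det A[\varnothing]=1$; it follows by substituting $\delta_{ij}+A_{ij}$ for the entries of $I+A$ in the Leibniz formula and grouping, for each $S$, the terms in which the $A$-factor is chosen exactly on the coordinates in $S$, since such a choice forces the contributing permutation to restrict to a permutation of $S$. Taking $A=\mathcal{I}_{\mathbb{R}}(C)$ identifies $\det(I+\mathcal{I}_{\mathbb{R}}(C))$ with the sum of the previous paragraph, which proves the theorem. I expect the only step requiring care to be the first one, where the identification of ``respecting the edge directions of $C$'' with the $\psi_{C}$-free Euler circuit partitions leans on the external theorem of Kotzig, Pevzner, and Ukkonen together with Proposition \ref{pivotlabel}; the rest is the block form of $M_{\mathbb{R}}^{0}$, the fact (Corollary \ref{detzero}) that the relevant minors are $0$ or $1$, and a textbook determinant expansion.
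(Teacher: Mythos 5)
Your proof is correct and follows essentially the same route as the paper: expand $\det(I+\mathcal{I}_{\mathbb{R}}(C))$ into a sum of principal minors, identify each minor with $\det M_{\mathbb{R}}^{0}(C,P_{S})$ via the block form, and apply Corollary \ref{detzero} to see which terms contribute $1$; the paper phrases the minor expansion using a diagonal matrix of indeterminates $X$ and then sets all $x_i=1$, which is equivalent to the identity $\det(I+A)=\sum_{S}\det A[S]$ that you use directly. The one place you add detail the paper leaves implicit is the identification of ``Euler systems respecting the edge directions of $C$'' with ``$\psi_{C}$-free Euler circuit partitions,'' which you route through Kotzig--Pevzner--Ukkonen and Proposition \ref{pivotlabel}; this is sound, though it also follows immediately from the definition of $\phi_{C}$ and $\chi_{C}$ as the two orientation-consistent transitions.
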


\begin{proof}
Let $v_{1},...,v_{n}$ be the vertices of $F$, and let $x_{1},...,x_{n}$ be
independent indeterminates. For each subset $S\subseteq\{1,...,n\}$, let
$P_{S}$ be the circuit partition of $F$ that involves $\phi_{C}(v_{i})$
whenever $i\in S$, and $\chi_{C}(v_{i})$ whenever $i\notin S$. Let%
\[
\mathcal{E}=\{S\subseteq\{1,...,n\}\mid P_{S}\text{ is an Euler system of
}F\}\text{.}%
\]
Let $X$ be the matrix with entries $x_{1},...,x_{n}$ on the diagonal, and
other entries $0$. Then Corollary \ref{detzero} tells us that
\[
\det(X+\mathcal{I}_{\mathbb{R}}(C))=\sum_{S\subseteq\{1,...,n\}}\left(
\prod_{i\in S}x_{i}\right)  \det M_{\mathbb{R}}^{0}(C,P_{S})=\sum
_{S\in\mathcal{E}}\left(  \prod_{i\in S}x_{i}\right)  \text{.}%
\]
That is, $\det(X+\mathcal{I}_{\mathbb{R}}(C))$ is a version of the indicator
function of the set $\mathcal{E}$. The theorem follows by setting
$x_{1},...,x_{n}$ equal to $1$.
\end{proof}

Theorem \ref{detcount} implies that in polynomial time, one can calculate the
number of Euler systems of $F$ that respect the edge directions defined by
$C$. Ge and \v{S}tefankovi\v{c} \cite{GS} proved that in contrast, the problem
of counting all the Euler systems of $F$ is $\#P$-complete.

\bigskip


\begin{thebibliography}{99}                                                                                               %


\bibitem {A2}R. Arratia, B. Bollob\'{a}s, and G. B. Sorkin, \emph{The
interlace polynomial of a graph}, J. Combin. Theory Ser. B \textbf{92} (2004), 199-233.

\bibitem {A}R. Arratia, B. Bollob\'{a}s, and G. B. Sorkin, \emph{A
two-variable interlace polynomial}, Combinatorica \textbf{24} (2004), 567-584.

\bibitem {Be}I. Beck, \emph{Cycle decomposition by transpositions}, J. Combin.
Theory Ser. A \textbf{23} (1977), 198-207.

\bibitem {BM}I. Beck and G. Moran, \emph{Introducing disjointness to a
sequence of transpositions}, Ars Combin. \textbf{22} (1986), 145-153.

\bibitem {B}B. Bollob\'{a}s, \textit{Modern Graph Theory}, Springer-Verlag,
New York, 1998.

\bibitem {Bold}A. Bouchet, \emph{Caract\'{e}risation des symboles crois\'{e}s
de genre nul}, C. R. Acad. Sci. Paris S\'{e}r. A-B \textbf{274} (1972), A724-A727.

\bibitem {Bi1}A. Bouchet, \emph{Isotropic systems}, European J. Combin.
\textbf{8} (1987), 231-244.

\bibitem {Bi2}A. Bouchet, \emph{Graphic presentation of isotropic systems}, J.
Combin. Theory Ser. B \textbf{45} (1988), 58-76.

\bibitem {Bu}A. Bouchet, \emph{Unimodularity and circle graphs}, Discrete
Math. \textbf{66} (1987), 203-208.

\bibitem {Br}H. R. Brahana, \emph{Systems of circuits on two-dimensional
manifolds}, Ann. Math. \textbf{23} (1921), 144-168.

\bibitem {BT4}R. Brijder and L. Traldi, \emph{A characterization of circle graphs in terms of multimatroid representations}, Electron. J. Combin. \textbf{27} (2020), \#P1.25.

\bibitem {BT}R. Brijder and L. Traldi, \emph{A characterization of circle graphs in terms of total unimodularity}, European J. Combin. \textbf{102} (2022), 103455.

\bibitem {CL}M. Cohn and A. Lempel, \emph{Cycle decomposition by disjoint
transpositions}, J. Combin. Theory Ser. A \textbf{13} (1972), 83-89.

\bibitem {GS}Q.\ Ge and D.\ \v{S}tefankovi\v{c}, \emph{The complexity of
counting Eulerian tours in 4-regular graphs}, Algorithmica \textbf{63} (2012), 588--601.

\bibitem {J1}F. Jaeger, \emph{On some algebraic properties of graphs}, in:
\emph{Progress in Graph Theory} (Waterloo, Ont., 1982), Academic Press,
Toronto, 1984, pp. 347-366.

\bibitem {Jo}J. Jonsson, \emph{On the number of Euler trails in directed
graphs}, Math. Scand. \textbf{90} (2002), 191-214.

\bibitem {Kau}L. H. Kauffman, \emph{State models and the Jones polynomial},
Topology \textbf{26} (1987), 395-407.

\bibitem {KR}J. Keir and R. B. Richter, \emph{Walks through every edge exactly
twice II}, J. Graph Theory \textbf{21} (1996), 301-309.

\bibitem {K}A. Kotzig, \emph{Eulerian lines in finite 4-valent graphs and
their transformations}, in: \emph{Theory of Graphs} (Proc. Colloq., Tihany,
1966), Academic Press, New York, 1968, pp. 219-230.

\bibitem {L2}M. Las Vergnas, \emph{On Eulerian partitions of graphs}, in:
\emph{Graph Theory and Combinatorics} (Proc. Conf., Open Univ., Milton Keynes,
1978), Res. Notes in Math., 34, Pitman, Boston, Mass.-London, 1979, pp. 62--75.

\bibitem {L1}M. Las Vergnas, \emph{Eulerian circuits of 4-valent graphs
imbedded in surfaces}, in: \emph{Algebraic Methods in Graph Theory}, Vol. I,
II (Szeged, 1978), Colloq. Math. Soc. J\'{a}nos Bolyai, 25, North-Holland,
Amsterdam-New York, 1981, pp. 451--477.

\bibitem {L}M. Las Vergnas, \emph{Le polyn\^{o}me de Martin d'un graphe
Eul\'{e}rien}, Ann. Discrete Math. \textbf{17} (1983), 397-411.

\bibitem {Lau}J. Lauri, \emph{On a formula for the number of Euler trails for
a class of digraphs}, Discrete Math. \textbf{163} (1997), 307-312.

\bibitem {MP}N. Macris and J. V. Pul\'{e}, \emph{An alternative formula for
the number of Euler trails for a class of digraphs}, Discrete Math.
\textbf{154} (1996), 301-305.

\bibitem {Ma}P. Martin, \emph{Enum\'{e}rations eul\'{e}riennes dans les
multigraphes et invariants de Tutte-Grothendieck}, Th\`{e}se, Grenoble (1977).

\bibitem {Me}B.\ Mellor, \emph{A few weight systems arising from intersection
graphs}, Michigan Math. J. \textbf{51} (2003), 509-536.

\bibitem {M}G. Moran, \emph{Chords in a circle and linear algebra over GF(2)},
J. Combin. Theory Ser. A \textbf{37} (1984), 239-247.

\bibitem {P}P. A. Pevzner, \emph{DNA physical mapping and alternating Eulerian
cycles in colored graphs}, Algorithmica \textbf{13} (1995) 77-105.

\bibitem {RR}R. C. Read and P. Rosenstiehl, \emph{On the Gauss crossing
problem}, in: \emph{Combinatorics} (Proc. Fifth Hungarian Colloq., Keszthely,
1976), Vol. II, Colloq. Math. Soc. J\'{a}nos Bolyai, 18, North-Holland,
Amsterdam-New York, 1978, pp. 843-876.

\bibitem {R}R. B. Richter, \emph{Walks through every edge exactly twice}, J.
Graph Theory \textbf{18} (1994), 751-755.

\bibitem {So}E.\ Soboleva, \emph{Vassiliev knot invariants coming from Lie
algebras and 4-invariants}, J. Knot Theory Ramifications \textbf{10} (2001), 161-169.

\bibitem {S}S. Stahl, \emph{On the product of certain permutations}, European
J.\ Combin. \textbf{8} (1987), 69-72.

\bibitem {Tbn}L.\ Traldi, \emph{Binary nullity,\ Euler circuits and
interlacement polynomials}, European J. Combin. \textbf{32} (2011), 944-950.

\bibitem {T5}L. Traldi, \emph{On the linear algebra of local complementation},
Linear Alg. Appl. \textbf{436} (2012), 1072--1089.

\bibitem {Tnew}L. Traldi, \emph{Interlacement in 4-regular graphs: a new
approach using nonsymmetric matrices}, Contrib. Discrete Math. \textbf{9}
(2014), 85-97.

\bibitem {U}E. Ukkonen, \emph{Approximate string-matching with q-grams and
maximal matches}, Theoret. Comput. Sci. \textbf{92} (1992) 191-211.

\bibitem {Z}L.\ Zulli, \emph{A matrix for computing the Jones polynomial of a
knot}, Topology \textbf{34} (1995), 717-729.
\end{thebibliography}
\end{document}